\documentclass[11pt,bezier]{article}
\setcounter{page}{1}
\usepackage{amsmath,amssymb,amsfonts,euscript,graphicx}
\usepackage{mathtools}
\usepackage{amsfonts}
\usepackage[english]{babel}
\usepackage[all]{xy}
\usepackage{tikz-cd}

\textwidth = 15 cm \textheight = 20 cm \oddsidemargin =0.7 cm
\evensidemargin = -3 cm \topmargin = 1 cm
\parskip = 2 mm
\newtheorem{preproof}{{\bf \indent Proof.}}

\newenvironment{proof}[1]{\begin{preproof}{\rm
               #1}\hfill{$\Box$}}{\end{preproof}}

%%%%%%%%%%%%%%%%%%%%%%%%%%%%%%%%%%%%%%%%%%%%%%%%%%%%%%%%%%%%%%%%%%%%%%%%%%%%%%%%%%%%%%%%%%%%%%%%%%%%%%%%%%%%%%%%%%%%%%%%%%%%%%%%%%%%%%%%%%%%%%%%%%%%
%\newcommand {\proof}{{\bf Proof.}}
\newtheorem{prop}{\bf\indent Proposition}[section]
\newtheorem{defn}[prop]{\bf\indent Definition}
\newtheorem{cor}[prop]{\bf\indent Corollary}
\newtheorem{example}[prop]{\bf\indent Example}
\newtheorem{thm}[prop]{\bf\indent Theorem}

\newtheorem{lem}[prop]{\bf\indent Lemma}
%\renewcommand{\baselinestretch}{1.2}
%%%%%%%%%%%%%%%%%%%%%%%%%%%%%%%%%%%%%%%%%%%%%%%%%%%%%%%%%%%%%%%%%%%%%%%%%%%%%%%%%%%%%%%%%%%%%%%%%%%%%%%%%%%%%%%%%%%%%%%%%%%%%%%%%%%%%%%%%%%%%%%%%%%%%%%
\title{\bf  \large Commutative rings whose proper ideals \\ are direct sum of cyclically presented modules\thanks
{{\it Key Words}:  Cyclic modules, Cyclically presented modules, Semi-local rings, Principal ideal rings, Prime ideals.} \thanks
{\indent{~~2010 {\it Mathematics Subject Classification}: 13C05, 13F10, 13H99.}}}
\author{{\normalsize  {\sc R. Nikandish${}^{\mathsf{a}}$\thanks{Corresponding author}, {\sc M.J. Nikmehr${}^{\mathsf{b}}$} and {\sc A. Yassine${}^{\mathsf{b}}$}  }
}\vspace{3mm}\\
{\footnotesize{}}\\
{\footnotesize{}}\\
{\footnotesize{${}^{\mathsf{a}}$\it Department of Mathematics, Jundi-Shapur University of Technology,}}\\
{\footnotesize{\rm P.O. BOX \rm{64615-334},
Dezful, Iran}}\\
{\footnotesize{ $\mathsf{r.nikandish@ipm.ir}$}}\\
{\footnotesize{${}^{\mathsf{b}}$\it Faculty of Mathematics, K.N. Toosi
University of Technology, }}\\
{\footnotesize{\rm P.O. BOX \rm{16315-1618}, Tehran, Iran}}\\
{\footnotesize{ $\mathsf{nikmehr@kntu.ac.ir}$}}\quad\quad
{\footnotesize{$\mathsf{yassine\_ali@email.kntu.ac.ir}$}}\\
{\footnotesize{$\mathsf{}$ }}}
\date{}

\begin{document}

\maketitle
\begin{abstract}
{A famous result due to I. M. Isaacs states that if a commutative ring $R$ has the property that every prime ideal is principal, then every ideal of $R$ is principal. This motivates ring theorists to study commutative rings for which every ideal is a direct sum of cyclically presented modules. In this paper, we study commutative rings whose ideals are direct sum of cyclically presented modules.
}
\end{abstract}
%%%%%%%%%%%%%%%%%%%%%%%%%%%%%%%%%%%%%%%%%%%%%%%%%%%%%%%%%%%%%%%%%%%%%%%%%%%%%%%%%%%%%%%%%%%%%%%%%%%%%%%%%%%%%%%%%%%%%%%%%%%%%%%%%%%%%%%%%%%%%%%%%%%%%%%%
\begin{center}{\section{Introduction
}}\end{center}
\par
We assume throughout this paper that all rings are commutative with identity. Let $R$ be a ring and $X$ be a nonempty subset of $R$. The
set of prime ideals of $R$, the set of maximal ideals of $R$, the set of nilpotent elements of $R$, the set of zero-divisors of $R$, the set of integers, the annihilator of $X$ and integers modulo $n$ are denoted by Spec$(R)$, Max$(R)$, Nil$(R)$, $Z(R)$, $\mathbb{Z}$, Ann$(X) = \{a \in R | aX = 0\}$ and $\mathbb{Z}_n$, respectively. By a proper ideal $I$ of $R$, we mean an ideal with $I\neq R$.  A ring $R$ is called \textit {local {\rm(}resp. semilocal{\rm)}} if $R$ has a unique maximal ideal (resp. $R$ has a finite number of maximal ideals). For any undefined notation or terminology in commutative ring theory, we refer the reader to \cite{sha}.

Prime and maximal ideals play a central role in commutative ring theory and so these notions have been studied to specify the structure of all ideals of the rings. We note that two theorems from commutative algebra due to I. M. Isaacs and I. S. Cohen state that, to check whether every ideal in a ring is cyclic (resp. finitely generated), it suffices to test only the prime ideals (see  \cite[p. 8, Exercise 10]{Kaplansky} and \cite[Theorem 2]{Cohen}). It was shown by Kaplansky \cite{Kaplansky2} that, for a commutative Noetherian ring $R$, every maximal ideal of $R$ is principal if and only if every ideal of $R$ is principal. The study of rings over which modules are direct sums of cyclic modules has has been done by several authors. It was shown by K\"othe \cite{Kothe} and Cohen-Kaplansky \cite{CohenKaplansky} that ``a commutative ring $R$ has the property that every module is a direct sum of cyclic modules if and only if $R$ is an Artinian principal ideal ring".  The study of commutative rings $R$ such that the ideals of $R$ are direct sums of  cyclic modules was initiated by Behboodi et al. in \cite{Ghorbani,Heidari} and \cite{Behboodi2}. Later, Moradzadeh-Dehkordi and Shojaee \cite{Moradzadeh} investigated rings $R$ whose ideals  are direct sums of cyclically presented modules. Indeed, they proved the following theorem.
\begin{thm}
{\rm (\cite[Theorem 2.16]{Moradzadeh})} Let $(R, M)$ be a local ring. Then the following statements are equivalent:

$(1)$ Every ideal of $R$ is a direct sum of cyclically presented $R$-modules.

$(2)$ Every ideal of $R$ is a direct summand of a direct sum of cyclically presented $R$-modules.

$(3)$ Every prime ideal of $R$ is a direct sum of cyclically presented R-modules.

$(4)$ Every prime ideal of $R$ is a direct summand of a direct sum of cyclically presented R-modules.

$(5)$ Either $R$ is a principal ideal ring or $M = Rx\oplus Ry$, where $x,y\notin$Nil$(R)$, $R/{\rm Ann}(x)$ and $R/{\rm Ann}(y)$ are principal ideal rings.
\end{thm}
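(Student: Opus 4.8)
The natural plan is to run the cycle of implications $(1)\Rightarrow(2)\Rightarrow(4)$ and $(1)\Rightarrow(3)\Rightarrow(4)$, and then close it through $(4)\Rightarrow(5)\Rightarrow(1)$, so that the genuine content sits in the last two arrows. The four implications among $(1)$--$(4)$ are formal: a direct sum is a direct summand of itself, and every prime ideal is in particular an ideal, so these require no argument. To erase the apparent gap between ``direct sum'' and ``direct summand,'' I would first record the key softening lemma: if $C=R/(a)$ is cyclically presented then $\operatorname{End}_R(C)\cong R/(a)$, which is local because $R$ is local. Hence every cyclically presented module has a local endomorphism ring, and by the Krull--Schmidt--Azumaya theorem any direct summand of a direct sum of cyclically presented modules is again a direct sum of cyclically presented modules. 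Applied ideal by ideal this yields $(2)\Rightarrow(1)$ and $(4)\Rightarrow(3)$, so the whole statement collapses to proving $(3)\Rightarrow(5)$ and $(5)\Rightarrow(1)$.

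For $(3)\Rightarrow(5)$ I would start from the maximal ideal $M$, which is prime, and write $M=\bigoplus_{i\in\Lambda}Rx_i$ with each $Rx_i\cong R/\operatorname{Ann}(x_i)$ cyclically presented (so each $\operatorname{Ann}(x_i)$ is principal). The decisive elementary observation is that for $i\neq j$ one has $x_ix_j\in Rx_i\cap Rx_j=0$; thus the generators pairwise annihilate one another and $M^2=\sum_i Rx_i^2$. A Nakayama computation then gives $M/M^2=\bigoplus_i Rx_i/Mx_i$, so $\dim_{R/M}(M/M^2)$ equals the number of nonzero summands. Using the relations $x_ix_j=0$ one sees that every prime ideal contains all but at most one of the generators, so each non-nilpotent generator singles out a distinct minimal prime. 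I would combine this with the hypothesis that every prime (not just $M$) decomposes to force the sharp conclusion that $M$ has at most two summands, and that when two summands occur both generators lie outside $\operatorname{Nil}(R)$. If exactly one generator survives, $M=Rx$ is principal; by Isaacs' theorem from the introduction it then suffices to see that every prime of $R$ is principal to conclude that $R$ is a principal ideal ring. If two survive, $M=Rx\oplus Ry$ with $x,y\notin\operatorname{Nil}(R)$, and it remains to prove that $R/\operatorname{Ann}(x)$ and $R/\operatorname{Ann}(y)$ are principal ideal rings. Here I would again invoke Isaacs: a prime of $R/\operatorname{Ann}(x)$ comes from a prime $P\supseteq\operatorname{Ann}(x)$ of $R$, the multiplication map identifies $Px$ with $P/\operatorname{Ann}(x)$ inside $Rx$, and the decomposition of $P$ must be used to show this image is cyclic.

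For $(5)\Rightarrow(1)$ the principal ideal ring case is immediate: every ideal is of the form $Ra\cong R/\operatorname{Ann}(a)$, and $\operatorname{Ann}(a)$ is principal because $R$ is a principal ideal ring, so every ideal is cyclically presented. In the remaining case $M=Rx\oplus Ry$ the relation $xy=0$ shows that $Rx$ and $Ry$ mutually annihilate, and $\operatorname{Ann}(x)+\operatorname{Ann}(y)=M$ while $\operatorname{Ann}(x)\cap\operatorname{Ann}(y)=\operatorname{Ann}(M)$; this exhibits $R$ as a subring built from the fibre product of the two local principal ideal rings $R/\operatorname{Ann}(x)$ and $R/\operatorname{Ann}(y)$ over the common residue field $R/M$. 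I would then take an arbitrary proper ideal $I\subseteq M$ and analyse it through this structure: because $R/\operatorname{Ann}(x)$ and $R/\operatorname{Ann}(y)$ are principal ideal rings, the submodules $I\cap Rx$ and $I\cap Ry$ are cyclic, and one shows that either $I$ is already cyclically presented (this happens, for instance, for an ideal generated by a non-zero-divisor such as $x+y$) or $I=(I\cap Rx)\oplus(I\cap Ry)$ splits as a sum of two cyclically presented modules.

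The main obstacle, in my view, is the counting step inside $(3)\Rightarrow(5)$: turning the soft relations $x_ix_j=0$ and the dimension count on $M/M^2$ into the sharp statement that at most two non-nilpotent summands occur and that no nilpotent summand is present. The vanishing cross-products only bound, through minimal primes, the number of mutually independent non-nilpotent generators, and excluding extra summands seems to require feeding the decomposition hypothesis back into the smaller primes rather than into $M$ alone. A close second is the ideal-by-ideal verification in $(5)\Rightarrow(1)$, where the example $I=R(x+y)$ shows that the naive decomposition $I=(I\cap Rx)\oplus(I\cap Ry)$ can fail and one must instead recognise $I$ as cyclically presented on its own; organising this case distinction cleanly, most naturally via the fibre-product description of $R$, is where the real care is needed.
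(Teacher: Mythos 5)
The paper never proves this statement: it is quoted verbatim from \cite[Theorem 2.16]{Moradzadeh}. The fair comparison is therefore with the machinery the paper builds for its semi-local generalization (Proposition \ref{2.3}, Lemma \ref{2626}, Theorems \ref{3437} and \ref{vieww}), which mirrors the source's proof, and that comparison shows your proposal is missing the load-bearing lemmas. Your reduction is correct and is the standard one: cyclically presented modules over a local ring have local endomorphism rings, so \cite[Proposition 3]{Warfield} (the same reference the paper invokes in Theorem \ref{2.112}) collapses $(2)$ to $(1)$ and $(4)$ to $(3)$, leaving $(3)\Rightarrow(5)$ and $(5)\Rightarrow(1)$. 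The first genuine gap is the counting step in $(3)\Rightarrow(5)$, and here your proposed mechanism does not just have a hole --- it provably cannot work. You want to bound the number of summands of $M=\bigoplus_i Rx_i$ by two using the relations $x_ix_j=0$, the minimal primes they create, and a dimension count on $M/M^2$. These tools are blind to the difference between \emph{cyclic} and \emph{cyclically presented}, and for merely cyclic summands the bound is false: in $R=F[[x_1,x_2,x_3]]/(x_1x_2,x_1x_3,x_2x_3)$ one has $M=Rx_1\oplus Rx_2\oplus Rx_3$ with pairwise vanishing products, three non-nilpotent generators, three distinct minimal primes $Rx_i\oplus Rx_j$, and ${\rm Spec}(R)=\{M,\,Rx_1\oplus Rx_2,\,Rx_1\oplus Rx_3,\,Rx_2\oplus Rx_3\}$, so \emph{every} prime is a direct sum of cyclic modules, yet there are three summands. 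What excludes this under hypothesis $(3)$ is the fact you never use: if $Rx$ is cyclically presented over a local ring then ${\rm Ann}(x)$ is \emph{principal} (Schanuel's lemma plus cancellation of projectives; this is the paper's Proposition \ref{2.3}), whereas in the example ${\rm Ann}(x_1)=(x_2,x_3)$ is not principal. The actual argument (as in Theorem \ref{3437}) shows ${\rm Ann}(x_1)$ equals the complementary summand $\bigoplus_{i\neq 1}Rx_i$, concludes from principality that this complement is cyclic, and then gets exactly two summands from the uniqueness Lemma \ref{2626}.

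The second substantive implication $(5)\Rightarrow(1)$ is likewise only asserted at its hard point. Your dichotomy (``$I$ is itself cyclically presented, or $I=(I\cap Rx)\oplus(I\cap Ry)$'') has the right shape, but proving it requires the lemma of Behboodi--Shojaee \cite[Proposition 3.4]{Behboodi2}, packaged in this paper as condition $(*)$ of Definition \ref{*}: if $M=Rx\oplus L$ and $R/{\rm Ann}(x)$ is a principal ideal ring, then every nonzero element of $Rx$ equals $ax^n$ for a unit $a$. With it one shows, as in Theorem \ref{vieww}, that ideals inside $Rx$ are exactly the $Rx^n\cong R/Ry$, and that an ideal contained in neither $Rx$ nor $Ry$ is either $Rx^e\oplus Ry^t$ (cyclically presented summands, since ${\rm Ann}(x^e)=Ry$ and ${\rm Ann}(y^t)=Rx$) or is generated by a single element $u=ax^{e-1}+by^{t-1}$ with ${\rm Ann}(u)=0$, i.e.\ $I\cong R$. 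Even the step you take for granted in the fibre-product picture, ${\rm Ann}(x)\cap{\rm Ann}(y)={\rm Ann}(M)=0$, needs this lemma: from $w=rx+sy$ with $rx^2=sy^2=0$ one can only conclude $w=0$ by rewriting $rx$ as a unit times a power of $x$. Finally, in the principal case of $(3)\Rightarrow(5)$ you say it ``suffices to see that every prime of $R$ is principal'' but never see it; since $R$ is not assumed Noetherian you cannot simply quote Nakayama, and hypothesis $(3)$ must be fed in again: a prime $P\subsetneq M=Rx$ satisfies $P=Px$, and writing $P=\bigoplus_k Ry_k$, each $y_k=px$ with $p=\sum_j r_jy_j$ yields, on comparing components in the direct sum, $y_k(1-r_kx)=0$, so $y_k=0$ and $P=0$. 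In short: the skeleton and the Warfield reduction are right, but the three key lemmas (principality of annihilators of cyclically presented generators, uniqueness of the decomposition, and the unit-times-power description of $Rx$) are absent, and the counting mechanism you propose in their place cannot be repaired.
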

So a natural question is posed: ``What is the class of semi-local rings $R$ for which every ideal of $R$ is a direct sum of cyclically presented $R$-modules". This paper is in this theme and it is devoted to study semi-local rings in which every proper ideal is a direct sum of cyclically presented $R$-modules. It is shown that if $R$ is a semi-local ring with maximal ideals $\{M_1, \ldots,$ $M_t\}$ and $x\in R$, then the cyclic $R$-module $Rx$ is cyclically presented if and only if Ann$(x)$ is a principal ideal of $R$ (Proposition \ref{2.3}). We give a condition $(*)$ (Definition \ref{*}) and it is shown that if $R$ is a  semi-local ring that satisfies condition $(*)$ with maximal ideals $\{M_1, \ldots, M_t\}$ such that every prime ideal $P$ has the form $\bigoplus_{i\in I}Rx_i$ where $Rx_i$ is a direct sum of cyclically presented $R$-modules and $x_i\notin {\rm Nil}(R)$ for every $i\in I$ where $I$ is an index set, then for every maximal ideal $M_j$ of $R$ either $M_j$ is cyclic or $M_j = Rx_{1,j}\oplus Rx_{2,j}$ such that $Rx_{1,j}$, $Rx_{2,j}$ are cyclically presented and $R/{\rm Ann}(x_{1,j})$, $R/{\rm Ann}(x_{2,j})$ are principal ideal rings for every $j\in \{1, \ldots , t\}$ (Theorem \ref{3437}).  It is proved that to check whether every ideal of a  semi-local ring $R$ that satisfies condition $(*)$ is a direct sum of cyclically presented modules, it suffices to test only the prime ideals or the structures of the maximal ideals of $R$ (Theorem \ref{2.112}).

%%%%%%%%%%%%%%%%%%%%%%%%%%%%%%%%%%%%%%%%%%%%%%%%%%%%%%%%%%%%%%%%%%%%%%%%%%%%%%%%%%%%%%%%%%%%%%%%%%%%%%%%%%%%%%%%%%%%%%%%%%%%%%%%%%%%%%%%%%%%%%%%%%%%
\vspace*{1cm}
\begin{center}{\section{Main results
}}\end{center}

Recently in \cite{Moradzadeh}, it has been given two criteria
to check whether every ideal of a commutative local ring $R$ is a direct sum of
cyclically presented modules. They described the ideal structure
of commutative local rings for which every ideal of $R$ is a direct sum of cyclically
presented $R$-modules. This turns out to be a key result to progress
in the proof of commutative semi-local rings. The other important  goal of this paper is to show that commutative semi-local rings whose ideals are direct sums of cyclically presented modules are Noetherian. As a consequence, we describe the ideal structure of commutative semi-local rings in which every ideal of $R$ is a direct sum of cyclically presented $R$-modules. In light of \cite[Proposition 3.4]{Behboodi2} and Nakayama's Lemma,
 the following definition is given.

\begin{defn} \label{*}
Let $R$ be a semi-local ring and Max$(R)=\{M_1, \ldots,$ $M_t\}$. We say that $R$ satisfies condition $(*)$ over $M_i$'s, if the following conditions hold:

$(1)$ If $M_i = Rx_i \oplus L$ for some ideal $L$
of $R$ and some $x \in R$ such that $R/{\rm Ann}(x_i)$ is a principal ideal ring, then
every nonzero element of $Rx_i$ is of the form $ax^n$ for some unit element $a\in R$ and positive integer $n$.

$(2)$ If $M_j = Rx_{1,j} \oplus \cdots \oplus Rx_{n,j}$, then Ann$(x_{i,j}^{\alpha}) \subseteq M_j$ for every $i\in \{1, \ldots, n\}$, $j\in \{1, \ldots, t\}$ and positive integer $\alpha$. Also, for every finitely generated module $N$ over the ring $R$, if $I$ is an ideal of $R$ such that $I \subseteq M_j$ for some $j\in \{1, \ldots, t\}$ and $N = IN$, then $N = 0$.
\end{defn}

\begin{thm} \label{222}
Let $R$ be a semi-local ring such that satisfies condition $(*)$, Max$(R)=\{M_1, \ldots,$ $M_t\}$ and every prime ideal is a direct sum of cyclically presented $R$-modules. If $M_i = Rx_i$, then $R$ is a principal ideal domain and Spec$(R) = \{(0), M_1, \ldots,$ $M_t\}$. In particular, every nontrivial ideal of $R$ is of the form $Rx_i^n$ for some positive integer $n$ and $i\in \{1, \ldots, t\}$.
\end{thm}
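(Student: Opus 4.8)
The plan is to first upgrade the module-theoretic hypotheses to the statement that $R$ is a principal ideal ring, and only then to exploit condition $(*)$ to pin down the spectrum and the ideal lattice. For the first phase I would localize at each maximal ideal. Fix $i$ and pass to $R_{M_i}$; since localization is exact, commutes with direct sums, and sends $R/Rd$ to $R_{M_i}/R_{M_i}d$, every prime of $R_{M_i}$ (these being the localizations of the primes of $R$ contained in $M_i$) is again a direct sum of cyclically presented $R_{M_i}$-modules. The maximal ideal of $R_{M_i}$ is $x_iR_{M_i}$, which is cyclic and hence indecomposable over the local ring $R_{M_i}$, so the alternative $M=Rx\oplus Ry$ of \cite[Theorem 2.16]{Moradzadeh} is impossible; that theorem then forces $R_{M_i}$ to be a principal ideal ring, in particular Noetherian.

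Because $R$ is semi-local, each nonzero element lies in only finitely many maximal ideals, so the Noetherianness of the $R_{M_i}$ descends to $R$ (ideals of a semi-local ring are recovered from their finitely many localizations, and ascending chains therefore descend); thus $R$ is Noetherian. Now $R$ is a Noetherian ring all of whose maximal ideals are principal, so by Kaplansky's criterion quoted in the introduction $R$ is a principal ideal ring. In particular $\mathrm{Ann}(x_i)$ is principal, so $Rx_i$ is cyclically presented by Proposition~\ref{2.3}, and the homomorphic image $R/\mathrm{Ann}(x_i)$ of the principal ideal ring $R$ is again a principal ideal ring. Hence the hypothesis of clause $(1)$ of Definition~\ref{*} is met for every $i$, and I obtain the key normal form: every nonzero element of $M_i=Rx_i$ equals $ax_i^{n}$ for a unit $a$ and some $n\ge 1$.

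This normal form controls all ideals. If $J\neq 0$ is proper then $J\subseteq M_i$ for some $i$; letting $m$ be the least exponent occurring among the normal forms of the elements of $J$, one gets $x_i^{m}\in J$ while every element of $J$ lies in $Rx_i^{m}$, so $J=Rx_i^{m}$. Thus every nonzero proper ideal has the form $Rx_i^{m}$. To see that $R$ is a domain I would use clause $(2)$ of Definition~\ref{*}: first, no $x_i$ is nilpotent, since $x_i^{N}=0$ would give $\mathrm{Ann}(x_i^{N})=R\not\subseteq M_i$. Next, if $uv=0$ with $u,v\neq 0$ then both are nonunits, so $Ru=Rx_i^{m}$ and $Rv=Rx_j^{k}$, whence $x_i^{m}x_j^{k}=0$; the case $i=j$ contradicts non-nilpotence, and for $i\neq j$ we get $x_i^{m}\in\mathrm{Ann}(x_j^{k})\subseteq M_j$, forcing $x_i\in M_j$ and $M_i\subseteq M_j$, a contradiction. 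Hence $R$ has no zero divisors and is a principal ideal domain.

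For the spectrum, a nonzero proper ideal $Rx_i^{m}$ with $m\ge 2$ is not prime: in the domain $R$ the relation $x_i=rx_i^{m}$ would give $rx_i^{m-1}=1$, making $x_i$ a unit, so $x_i\cdot x_i^{m-1}\in Rx_i^{m}$ while neither factor is; thus only the $Rx_i=M_i$ and $(0)$ are prime, giving $\mathrm{Spec}(R)=\{(0),M_1,\dots,M_t\}$ and confirming that every nontrivial ideal is $Rx_i^{n}$. I would flag that this classification is self-sharpening: a product $M_iM_j$ with $i\neq j$ is a nonzero proper ideal of the domain $R$, yet equality $M_iM_j=Rx_\ell^{s}$ forces $x_\ell$ into both $M_i$ and $M_j$ and hence $M_i=M_j$, so the hypotheses are consistent only when $t=1$, i.e.\ the substantive case is the local one in which $R$ is a discrete valuation ring. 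The genuine obstacle throughout is the first phase --- extracting ``principal ideal ring'' from the prime-decomposition hypothesis; the localization-plus-\cite[Theorem 2.16]{Moradzadeh} route is the cleanest I see, but one must verify carefully that the decomposition hypothesis localizes and that Noetherianness descends through the semi-local structure, after which everything is forced by the normal form and clause $(2)$ of $(*)$.
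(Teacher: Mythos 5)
Your proof is correct, but it follows a genuinely different route from the paper's. The paper argues directly on the spectrum: for a non-maximal prime $P \subseteq M_i = Rx_i$, primality gives $P = Px_i = PM_i$, the Nakayama-type half of condition $(*)$ then forces $P = (0)$, so every prime is principal and Isaacs' theorem \cite[p. 8, Exercise 10]{Kaplansky} makes $R$ a principal ideal ring, a domain because $(0)$ is prime; clause $(1)$ of $(*)$ then gives the form $Rx_i^n$ exactly as you do. You instead reach ``principal ideal ring'' by localizing: the decomposition hypothesis passes to each $R_{M_i}$ (which you rightly verify), the cyclic --- hence indecomposable over a local ring --- maximal ideal rules out the two-generator alternative of \cite[Theorem 2.16]{Moradzadeh}, so each $R_{M_i}$ is a principal ideal ring; Noetherianness descends through the finitely many localizations, and Kaplansky's Noetherian criterion \cite{Kaplansky2} finishes. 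What your route buys is rigor precisely where the paper is loose: the paper's appeal to the Nakayama half of $(*)$ requires $P$ to be finitely generated (the condition is stated only for finitely generated modules), and its assertion that $(0)$ is prime tacitly assumes a non-maximal prime exists; your zero-divisor argument, which uses only the annihilator half of clause $(2)$ (namely ${\rm Ann}(x_j^k) \subseteq M_j$), closes both gaps, and indeed your proof never invokes the Nakayama half at all. What the paper's route buys is brevity and independence from the local theorem. Finally, your closing observation is correct and worth recording: if $t \geq 2$, then $x_1x_2 \in M_1$ must have normal form $ax_1^n$, forcing either $x_2$ to be a unit or $M_2 \subseteq M_1$, both absurd; so the hypotheses of this theorem are satisfiable only when $t = 1$, i.e. the non-local case is vacuous --- a point the paper does not notice.
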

\begin{proof}
{ Suppose that $M_i = Rx_i$ such that $x_i \notin$ Nil$(R)$ for every $i\in \{1, \ldots, t\}$. It is clear that if $R$ has at least two distinct maximal ideals $M_1 = Rx_1$ and
$M_2 = Rx_2$, then $x_1,x_2 \notin$ Nil$(R)$, if not, $R$ is a local ring. Then Nil$(R) \neq M_i$ for each $i$. So suppose that $P \in$ Spec$(R) \setminus \{M_1, \ldots, M_t\}$. Then there exists $i\in \{1, \ldots, t\}$ such that $P\subseteq M_i = Rx_i$, and so $P = P \cap Rx_i$ and $x_i\notin P$, as $P \notin \{M_1, \ldots, M_t\}$. Since $P$ is prime, we conclude that $P = Px_i = PRx_i.$  Hence, $P = (0)$, and therefore Spec$(R) = \{(0), M_1, \ldots, M_t\}$. Thus, by \cite[p. 8, Exercise 10]{Kaplansky}, $R$ is a principal ideal ring and since $(0)$ is a prime ideal, $R$ is also a principal ideal domain.

For the ``in particular" statement suppose that $R$ satisfies condition $(*)$ and $I$ is a nontrivial ideal of $R$. Since $R$ is a principal ideal domain, there exists an element $y\in R$ such that $I = Ry$. Assume that $I\subseteq M_i$ for some $i\in \{1, \ldots, t\}$. Hence,
$y = ax_i^n$ for some unit element $a$ of $R$ and positive integer $n$, and thus $I = Rx^n$, for some positive integer $n$.~}
\end{proof}

%\begin{remark}
%{\rm Let $R$ be a semi-local ring such that satisfies condition $(*)$, Max$(R)=\{M_1, \ldots, M_t\}$ and every prime ideal is a direct sum of cyclically presented $R$-modules. If $M_i = Rx_i$, where $x_i \notin$ Nil$(R)$ for some $i\in \{1, \ldots, t\}$, then by the proof of Theorem \ref{222},  Spec$(R) = \{(0), M_1, \ldots, M_t\}$. If $M_i = Rx_i$ such that $x_i^{k_i} = 0$ for some $i\in \{1, \ldots, t\}$ and positive integer $k_i$, then $(0)$ is not a prime, i.e., Spec$(R) = \{M_1, \ldots, M_t\}$ and so $R$ is Artinian.
%}
%\end{remark}

In the following proposition, we show that if $R$ is a semi-local ring and $a\in R$, then the cyclic $R$-module $Ra$ is cyclically presented if and only if Ann$(a)$ is a principal ideal of $R$.

\begin{prop} \label{2.3}
Let $R$ be a semi-local ring, Max$(R)=\{M_1, \ldots,$ $M_t\}$ and $x\in R$. Then the cyclic $R$-module $Rx$ is cyclically presented if and only if Ann$(x)$ is a principal ideal of $R$.
\end{prop}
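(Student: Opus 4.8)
The plan is to reduce everything to the canonical presentation of $Rx$ and then to the fact that over a commutative ring an isomorphism of cyclic modules forces equality of the defining ideals. First I would record the standard identification: the evaluation map $R \to Rx$, $r \mapsto rx$, is a surjective $R$-homomorphism whose kernel is exactly ${\rm Ann}(x)$, so $Rx \cong R/{\rm Ann}(x)$. This reduces the proposition to comparing the ideal ${\rm Ann}(x)$ with the relator ideal coming from a cyclic presentation, and it makes the forward implication purely a question about cyclic modules.

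The easy direction is then immediate: if ${\rm Ann}(x) = Ra$ is principal, then the isomorphism $Rx \cong R/Ra$ exhibits $Rx$ as the cokernel of multiplication by $a$, i.e. as a cyclically presented module. For the converse, I would unwind the definition: $Rx$ cyclically presented means there is an exact sequence $R \xrightarrow{\cdot a} R \to Rx \to 0$ for some $a \in R$, equivalently $Rx \cong R/Ra$. Combining this with the first step gives an $R$-isomorphism $R/{\rm Ann}(x) \cong R/Ra$, and it remains to deduce ${\rm Ann}(x) = Ra$.

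The crux, and the step I expect to be the main obstacle, is the elementary but essential claim that for a commutative ring $R$ and ideals $I, J$, an $R$-isomorphism $R/I \cong R/J$ forces $I = J$. I would prove this by tracking the generator: let $\phi \colon R/I \to R/J$ be the isomorphism and write $\phi(1 + I) = u + J$. Since $\phi$ is surjective and $1 + I$ generates $R/I$, the image $u + J$ generates $R/J$, so $Ru + J = R$ and $u$ is invertible modulo $J$; pick $v$ with $uv \equiv 1 \pmod{J}$. Because an isomorphism preserves annihilators of elements, $I = {\rm Ann}(1 + I) = {\rm Ann}(u + J) = (J :_R u)$. The inclusion $J \subseteq (J :_R u)$ is clear since $J$ is an ideal; conversely, if $ru \in J$ then multiplying by $v$ and using $uv \equiv 1 \pmod{J}$ gives $r \equiv rvu \in J$, so $(J :_R u) \subseteq J$. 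Hence $I = (J :_R u) = J$.

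Applying this with $I = {\rm Ann}(x)$ and $J = Ra$ yields ${\rm Ann}(x) = Ra$, which is principal, completing the argument. I would remark that the heart of the matter is entirely this annihilator-bookkeeping for the generator, and that the semi-local hypothesis is in fact never invoked; it is carried along only because it is standing throughout the section. If one wanted to stay closer to the paper's framework one could instead phrase the uniqueness of the relator ideal via the presentation sequence directly, but tracing the generator and its annihilator is the cleanest route and isolates exactly where commutativity is used.
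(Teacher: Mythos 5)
Your proof is correct, but it takes a genuinely different and more elementary route than the paper. For the nontrivial direction the paper compares the two short exact sequences $0 \to {\rm Ann}(x) \to R \to R/{\rm Ann}(x) \to 0$ and $0 \to I \to R \to R/I \to 0$ via Schanuel's Lemma (\cite[Lemma 5.1]{Lam}), obtaining $R \oplus {\rm Ann}(x) \cong R \oplus I$, and then cancels the free summand $R$ using the fact that finitely generated projective modules cancel from direct sums over a semi-local ring; that cancellation step is exactly where the semi-local hypothesis enters, and it yields only ${\rm Ann}(x) \cong I$ as $R$-modules (which suffices, since a module isomorphic to a principal ideal is cyclic). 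You instead rest the converse on the elementary fact that over a commutative ring an isomorphism $R/I \cong R/J$ forces $I = J$; your generator-tracking proof of this via the colon ideal $(J :_R u)$ is correct, though it can be shortened further: $I = {\rm Ann}_R(R/I)$, $J = {\rm Ann}_R(R/J)$, and isomorphic modules have equal annihilators. Your route buys two things the paper's does not: the conclusion is sharper (${\rm Ann}(x)$ literally equals the relator ideal $Ra$, rather than merely being isomorphic to a principal ideal), and, as you correctly observe, the semi-local hypothesis is never used, so the proposition holds over an arbitrary commutative ring. The paper's Schanuel-plus-cancellation argument is heavier machinery whose advantage would only appear for presentations by non-cyclic modules; for this particular statement your argument is the cleaner one.
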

\begin{proof}
{Let $R$ be a semi-local ring and Max$(R)=\{M_1, \ldots,$ $M_t\}$ and $x\in R$ such that the cyclic $R$-module $Rx$ is cyclically presented. Then $Rx \cong R/I$ for some principal ideal $I$ of $R$. Now, we consider the following diagram:
\[ \xymatrix{
  0 \ar[r] & {\rm Ann}(x) \ar[d]_-{\alpha} \ar[r]^-{f} & R \ar[d]_-{||} \ar[r]^-{g} & R/{\rm Ann}(x) \ar[d]^-{\wr} \ar[r] & 0 \\
  0 \ar[r] & I \ar[r]_-{f'} & R \ar[r]_-{g'} & R/I \ar[r] & 0.
} \]
Therefore, by \cite[Lemma 5.1]{Lam}, we conclude that $R\oplus {\rm Ann}(x) \cong R\oplus I$. But $R$ is semi-local; hence finitely generated projective modules cancel from direct sums. Thus Ann$(x) \cong I$ which is a principal ideal of $R$. Conversely, since $Rx \cong R/{\rm Ann}(x)$, the proof is clear.
}
\end{proof}

In the following result the spectrum of $R$ is described in case $R$ is a semi-local ring satisfies condition $(*)$ for which every maximal ideal is not cyclic.

\begin{thm} \label{prev}
Let $R$ be a semi-local ring such that satisfies condition $(*)$, Max$(R)=\{M_1, \ldots, M_t\}$, every prime ideal $P$ has the form $\bigoplus_{i\in I}Rx_i$ where $Rx_i$ is a direct sum of cyclically presented $R$-modules and $x_i\notin {\rm Nil}(R)$ for every $i\in I$ where $I$ is an index set and $M_i$'s are not cyclic. Then the following statements hold:

$(1)$ {\rm Spec}$(R) = \{M_1, \ldots, M_t, P_{k,j}\}_{k\in I, j\in \{1, \ldots , t\}}$, where $P_{k,j} = \bigoplus_{i\in I\setminus \{k\}}Rx_{i,j}$ such that $x_{i,j}\in R$.

$(2)$ $R/$Ann$(x_{k,j})$ is a principal ideal ring for every $k \in I$ and $j\in \{1, \ldots , t\}$.

\end{thm}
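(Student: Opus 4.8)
The plan is to reduce the global statement to the local case settled in \cite[Theorem 2.16]{Moradzadeh} by localizing at each maximal ideal, to read off the local spectrum by the elementary device already used in Theorem \ref{222}, and finally to transport everything back to $R$ using the two clauses of condition $(*)$ from Definition \ref{*}. Each $M_j$ is prime, so by hypothesis $M_j=\bigoplus_{i\in I}Rx_{i,j}$ with every $Rx_{i,j}$ cyclically presented and $x_{i,j}\notin\mathrm{Nil}(R)$. First I would localize at $M_j$: since localization is exact, preserves direct sums, and preserves cyclic presentations, $M_jR_{M_j}=\bigoplus_i(Rx_{i,j})_{M_j}$ is a direct sum of cyclically presented $R_{M_j}$-modules, and every prime of $R_{M_j}$ is again of this form. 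As $M_j$ is not cyclic, $R_{M_j}$ is not a principal ideal ring, so \cite[Theorem 2.16]{Moradzadeh} places us in its alternative~(5): $M_jR_{M_j}=R_{M_j}\bar u_j\oplus R_{M_j}\bar v_j$ with $\bar u_j,\bar v_j$ non-nilpotent and $R_{M_j}/\mathrm{Ann}(\bar u_j)$, $R_{M_j}/\mathrm{Ann}(\bar v_j)$ principal ideal rings.

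Next I would compute $\mathrm{Spec}(R_{M_j})$ exactly as in the proof of Theorem \ref{222}. The quotient $R_{M_j}/\mathrm{Ann}(\bar u_j)$ is a local principal ideal ring (a chain ring) whose maximal ideal contains the non-nilpotent $\bar u_j$, hence is not nil; a local chain ring with non-nil maximal ideal is a discrete valuation ring, in particular a domain. From $\bar u_j\bar v_j\in R_{M_j}\bar u_j\cap R_{M_j}\bar v_j=0$ and the automatic inclusion $\mathrm{Ann}(\bar u_j)\subseteq\mathfrak m_j:=M_jR_{M_j}$ one then gets $\mathrm{Ann}(\bar u_j)=R_{M_j}\bar v_j$, which is prime, and symmetrically $\mathrm{Ann}(\bar v_j)=R_{M_j}\bar u_j$. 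The orthogonality argument shows any prime $\mathfrak p\subsetneq\mathfrak m_j$ contains exactly one of the two complementary summands, so $\mathrm{Spec}(R_{M_j})=\{\mathfrak m_j,\,R_{M_j}\bar u_j,\,R_{M_j}\bar v_j\}$, the two minimal primes being the complementary cyclic summands.

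Now I would globalize the number of summands. Condition $(*)(2)$ gives $\mathrm{Ann}(x_{i,j})\subseteq M_j$ for every $i$, so no summand collapses under localization, i.e. $(Rx_{i,j})_{M_j}\neq0$ for all $i\in I$. Over the local ring $R_{M_j}$ each nonzero cyclic module $(Rx_{i,j})_{M_j}\cong R_{M_j}/\mathrm{Ann}$ has local endomorphism ring, hence is indecomposable, so comparing the two Krull--Schmidt decompositions of $\mathfrak m_j$ forces exactly two surviving summands; thus $I$ is (up to zero summands) $\{1,2\}$ and $M_j=Rx_{1,j}\oplus Rx_{2,j}$. By the bijection between primes of $R$ inside $M_j$ and primes of $R_{M_j}$, there are then precisely two primes of $R$ strictly below $M_j$.

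Finally I would identify these two primes with the complementary summands $P_{1,j}=Rx_{2,j}$ and $P_{2,j}=Rx_{1,j}$. Running the orthogonality argument inside $R$, any prime $P\subsetneq M_j$ must contain all summands but one, hence contains some $\bigoplus_{i\neq k}Rx_{i,j}=P_{k,j}\subseteq\mathrm{Ann}(x_{k,j})$. To upgrade this to $P_{k,j}=\mathrm{Ann}(x_{k,j})$ I would take $z=\sum_i r_ix_{i,j}\in\mathrm{Ann}(x_{k,j})$, reduce $zx_{k,j}=0$ to $r_kx_{k,j}^2=0$, and use condition $(*)(1)$ together with $x_{k,j}\notin\mathrm{Nil}(R)$ to exclude $r_kx_{k,j}\neq0$; then $R/P_{k,j}\cong Rx_{k,j}$ is a domain with principal maximal ideal generated by a non-nilpotent element, so $P_{k,j}$ is prime and, being minimal over nothing below a height-one principal prime, forces $P=P_{k,j}$. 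Taking the union over $j$ (every prime lies under some $M_j$) yields statement~(1). For statement~(2), $R/\mathrm{Ann}(x_{k,j})=R/P_{k,j}$ is a domain that is locally a discrete valuation ring or field at each of its finitely many maximal ideals, hence a semilocal Dedekind domain, i.e. a principal ideal ring (equivalently, all its primes are principal, so \cite[p.~8, Exercise~10]{Kaplansky} applies). I expect the main obstacle to be exactly this global--local reconciliation in the last paragraph: promoting the local discrete-valuation-ring description to the global equalities $P_{k,j}=\mathrm{Ann}(x_{k,j})$ and to the genuine principal-ideal-ring property of $R/\mathrm{Ann}(x_{k,j})$, which is precisely where the normal form in $(*)(1)$ and the annihilator containment in $(*)(2)$ are indispensable.
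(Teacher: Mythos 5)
Your strategy is genuinely different from the paper's: the paper never localizes, but instead works entirely in $R$, using orthogonality of the summands, the Nakayama clause $(*)(2)$ to kill the stray component $Px_{k,j}$, an explicit computation with the normal-form clause $(*)(1)$ to verify primality of each $P_{k,j}$, and Kaplansky's prime-implies-principal theorem applied to $R/P_{k,j}$ for statement $(2)$. Your first three paragraphs (localize, apply \cite[Theorem 2.16]{Moradzadeh}, compute ${\rm Spec}(R_{M_j})$, use $(*)(2)$ for summand survival and Azumaya/Krull--Schmidt to force two summands) are essentially workable, although the claim in your first paragraph that ``$M_j$ not cyclic implies $R_{M_j}$ is not a principal ideal ring'' already requires the survival argument plus indecomposability of cyclic modules over a local ring, ingredients you only supply in your third paragraph; note also that Theorem \ref{prev} as stated does not assert $|I|=2$ (that is Theorem \ref{3437}), so you are proving more than is asked, which is legitimate.

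The genuine gap is in your last paragraph, the identification step. First, you never actually prove that $P_{k,j}$ is prime: ``$R/P_{k,j}\cong Rx_{k,j}$ is a domain'' is a non sequitur, because $R/{\rm Ann}(x_{k,j})\cong Rx_{k,j}$ is an isomorphism of $R$-modules, and $Rx_{k,j}$ is an ideal, not a ring; no ring-theoretic (domain) property can be read off from it. The paper does real work here: it takes $ab\in P_{k,j}$ with $a,b\notin P_{k,j}$, writes $a=cx_{k,j}^n+p$ and $b=dx_{k,j}^m+p_2$ via $(*)(1)$, and derives $x_{k,j}^{n+m}=0$, contradicting $x_{k,j}\notin{\rm Nil}(R)$. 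Second, even granting primality, your deduction $P=P_{k,j}$ leans on a height-one/principal-ideal-theorem style argument that is unavailable, since $R$ is not yet known to be Noetherian; the paper's mechanism for this step, entirely absent from your proposal, is the Nakayama clause: for a prime $P\subsetneq M_j$, orthogonality gives $P\supseteq P_{k,j}$ for the unique $k$ with $x_{k,j}\notin P$, hence $P=Px_{k,j}\oplus P_{k,j}$ with $Px_{k,j}=Px_{k,j}^2$, and then $(*)(2)$ forces $Px_{k,j}=0$, i.e.\ $P=P_{k,j}$. Third, for statement $(2)$ your claim that $R/{\rm Ann}(x_{k,j})$ is ``locally a DVR or field at each of its finitely many maximal ideals'' is unsupported: you have only computed the localization at $M_j$, while ${\rm Ann}(x_{k,j})$ may be contained in other maximal ideals $M_l$ about which your analysis says nothing; the paper instead applies Kaplansky's theorem to $R/P_{k,j}$ globally. (Your use of $(*)(1)$ to get $P_{k,j}={\rm Ann}(x_{k,j})$ also presupposes that $R/{\rm Ann}(x_{k,j})$ is a principal ideal ring, which is the hypothesis built into clause $(1)$ of Definition \ref{*} and is precisely statement $(2)$; the paper's own proof makes the same move, so I do not count it against you.) Ironically, your localization contains a clean repair that you did not execute: the contractions to $R$ of the two minimal primes of $R_{M_j}$ are honest primes strictly inside $M_j$, and the orthogonality-plus-$(*)(2)$ argument just described forces each of them to equal $Rx_{1,j}$ or $Rx_{2,j}$; since they are distinct, both summands are prime and part $(1)$ follows without ever invoking $(*)(1)$ --- but that is not the argument you wrote.
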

\begin{proof}
{$(1)$ Suppose that $R$ is a semi-local ring such that satisfies condition $(*)$, Max$(R)=\{M_1, \ldots, M_t\}$ and  every prime ideal $P$ has the form $\bigoplus_{i\in I}Rx_i$, where $Rx_i$ is a direct sum of cyclically presented $R$-modules for every $i\in I$ and $M_i$'s are not cyclic. Then for every $j\in \{1, \ldots , t\}$ we have $M_j = \bigoplus_{i\in I}Rx_{i,j}$, where $I$ is an index set, $x_{i,j}\notin {\rm Nil}(R)$ and $Rx_{i,j}$ is cyclically presented for each $i \in I$ and $j\in \{1, \ldots , t\}$. It is clear that Nil$(R) \neq M_i$ for each $i$, and so suppose that $P$ is a prime ideal of $R$ such that $P \notin \{M_1, \ldots, M_t\}$. Then there exists $j\in \{1, \ldots , t\}$ such that $P\subset M_j$. Since $x_{i,j}x_{k,j} = 0 \in P$, for every $i\neq k$ we deduce that either $x_{i,j}\in P$ or $x_{k,j}\in P$. Therefore there exists $k\in I$ such that $x_{k,j}\notin P$ and $x_{i,j}\in P$ for every $i\neq k$. This means that $\bigoplus_{i\in I\setminus \{k\}}Rx_{i,j}\subseteq P$. Since $P$ is prime, we conclude that

$$P = P\cap M_j = (P\cap Rx_{k,j})\oplus (\bigoplus_{i\in I\setminus \{k\}}Rx_{i,j}) = Px_{k,j} \oplus (\bigoplus_{i\in I\setminus \{k\}}Rx_{i,j}).$$
Therefore, $Px_{k,j} = Px^2_{k,j} = Px_{k,j}Rx_{k,j}$. But $Px_{k,j}$ is a direct sum of cyclic modules and $R$ satisfies condition $(*)$, hence $Px_{k,j} = 0$. Thus $M_j = P\oplus Rx_{k,j}$, and so Spec$(R) \subseteq \{M_1, \ldots, M_t, P_{k,j}\}_{k\in I, j\in \{1, \ldots , t\}}$, where $P_{k,j} = \bigoplus_{i\in I\setminus \{k\}}Rx_{i,j}$. Conversely, suppose that $ab\in P_{k,j}$, for some $a,b\in R$ such that $a,b\notin P_{k,j}$. Since $M_j$ is maximal, assume that $a\in M_j$, and so $a = r_kx_{k,j} + p_{k,j}$ for some $r_k\in R$ and $p_{k,j}\in P_{k,j}$. But $R$ satisfies condition $(*)$, which means that $a = cx_{k,j}^n + p_{k,j}$ for some unit element $c\in R$ and positive integer $n$. Since $ab\in P_{k,j}$, we deduce that $cbx_{k,j}^n + bp_{k,j}\in P_{k,j}$, and so $cbx_{k,j}^n \in P_{k,j}$. Therefore $cbx_{k,j}^n \in P_{k,j}\cap Rx_{k,j} = 0$. Since $R$ satisfies condition $(*)$, $b\in {\rm Ann}(x_{k,j}^n) \subseteq M_j$. Hence $b = dx_{k,j}^n + p_{2}$ for some unit element $d\in R$, positive integer $m$ and $p_2\in P_{k,j}$. Thus $ab = cdx_{k,j}^{m+n} \in P_{k,j}\cap Rx_{k,j} = 0$. This means that $x_{k,j}\in {\rm Nil}(R)$, a contradiction. Hence either $a\in P_{k,j}$ or $b\in P_{k,j}$ and thus Spec$(R) = \{M_1, \ldots, M_t, P_{k,j}\}_{k\in I, j\in \{1, \ldots , t\}}$, where $P_{k,j} = \bigoplus_{i\in I\setminus \{k\}}Rx_{i,j}$.

$(2)$ By Part $(1)$, {\rm Spec}$(R) = \{M_1, \ldots, M_t, P_{k,j}\}_{k\in I, j\in \{1, \ldots , t\}}$, where $P_{k,j} = \bigoplus_{i\in I\setminus \{k\}}Rx_{i,j}$, and so \cite[p. 8, Exercise 10]{Kaplansky} implies that  $R/P_{k,j}$ is a principal ideal ring for each $k \in I$ and $j\in \{1, \ldots , t\}$, as every prime ideal of $R/P_{k,j}$ is cyclic.
%3ashen yemken bas l maximal ideals fina nekhod ta ykoun shamele P_{k,j}
 Also, $P_{k,j}\subset$ Ann$(x_{k,j})$ for each $k \in I$ and $j\in \{1, \ldots , t\}$. Therefore, $R/$Ann$(x_{k,j})$ is a homomorphic image of $R/P_{k,j}$ and hence $R/$Ann$(x_{k,j})$ is a principal ideal ring for each $k \in I$ and $j\in \{1, \ldots , t\}$.
}
\end{proof}

The following lemma is needed for the proof of Theorem \ref{3437}.

\begin{lem} \label{2626} {\rm (See \cite[Proposition 2.7]{Heidari}).}
Let $R$ be a semi-local ring and let $M \in$ {\rm Max}$(R)$. If $M = \bigoplus_{i\in I}Rx_i = \bigoplus_{j\in J}Ry_j$ where $I,J$ are index sets and $Rx_i, Ry_j$ are nonzero cyclic $R$-modules, then $|I| = |J|$.
\end{lem}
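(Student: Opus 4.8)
The plan is to detect the number of summands through the semisimple quotient of $R$. Let $J=\mathrm{rad}(R)=M_1\cap\cdots\cap M_t$ be the Jacobson radical; by the Chinese Remainder Theorem $\bar R:=R/J\cong\prod_{s=1}^{t}R/M_s$ is a finite product of fields, hence semisimple. Applying $-\otimes_R\bar R=(-)/J(-)$ to the first decomposition gives
\[
M/JM=\bigoplus_{i\in I}(Rx_i)/J(Rx_i).
\]
Each $Rx_i$ is cyclic, hence finitely generated and nonzero, so Nakayama's Lemma yields $(Rx_i)/J(Rx_i)\neq 0$. Thus $M/JM$ is a direct sum, indexed by $I$, of nonzero modules over the semisimple ring $\bar R$, so its decomposition into simple $\bar R$-modules has a well-defined number of factors $\ell$ (a cardinal invariant of the module $M$), and $\ell\geq|I|$, with equality precisely when every summand $(Rx_i)/J(Rx_i)$ is simple. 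The same computation applied to the second decomposition gives $\ell\geq|J|$ together with the analogous equality criterion.

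First I would settle the local case, which is clean: if $t=1$ then $\bar R=R/M$ is a field, every nonzero cyclic $\bar R$-module is one-dimensional, so each summand is automatically simple and $|I|=\ell=\dim_{R/M}(M/M^2)=|J|$. In the semi-local case, localizing recovers at least the part of each decomposition supported at a given maximal ideal. Localizing at $M$ kills exactly those $Rx_i$ with $\mathrm{Ann}(x_i)\not\subseteq M$, and over the local ring $R_M$ the surviving summands give $\#\{i:\mathrm{Ann}(x_i)\subseteq M\}=\dim_{R/M}(M/M^2)$, a quantity matching for both decompositions by the local case. Localizing instead at a maximal ideal $M_s\neq M$ turns $M$ into the unit ideal, and since $R_{M_s}$ is indecomposable exactly one summand survives, so $\#\{i:\mathrm{Ann}(x_i)\subseteq M_s\}=1$; the same holds for the $y_j$.

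The main obstacle is to pass from the invariance of $\ell$ (and of these per-maximal-ideal counts) to the invariance of the single cardinal $|I|$. Concretely, one must show that every cyclic direct summand $Rx_i\cong R/\mathrm{Ann}(x_i)$ of the maximal ideal $M$ is a \emph{local} module, i.e. that $\mathrm{Ann}(x_i)$ lies in a unique maximal ideal; equivalently that each $(Rx_i)/J(Rx_i)$ is simple rather than a nonzero cyclic $\bar R$-module spread across several residue fields. Granting this, each $x_i$ is counted exactly once across the maximal ideals, so $|I|=\sum_{s}\#\{i:\mathrm{Ann}(x_i)\subseteq M_s\}=\dim_{R/M}(M/M^2)+(t-1)=|J|$. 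This local-ness of the summands is automatic when $R$ is local but is the essential content in the semi-local setting, and I expect it to be the hardest step: I would try to extract it from the fact that $Rx_i$ is a direct summand of $M$ (not merely a submodule) together with the comaximality of the distinct maximal ideals, taking care that the cyclic summands are indecomposable so that no summand can split across two residue fields.
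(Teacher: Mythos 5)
Your reduction modulo $\mathrm{rad}(R)$ and your treatment of the local case are correct, but the step you postpone --- that every cyclic direct summand $Rx_i$ of $M$ is a local module, i.e.\ that $\mathrm{Ann}(x_i)$ lies in a unique maximal ideal --- is not merely the hardest step: it is false, and with it the lemma exactly as stated. Take $R=k\times k\times k$ ($k$ a field) and $M=0\times k\times k\in\mathrm{Max}(R)$. Then $M=R(0,1,1)$ is itself cyclic, while also $M=R(0,1,0)\oplus R(0,0,1)$, so $|I|=1\neq 2=|J|$. Here $R(0,1,1)\cong R/(k\times 0\times 0)\cong k\times k$ is a direct summand of $M$ (it equals $M$) which is not local: modulo the radical it splits into two simples, living at the two maximal ideals $k\times 0\times k$ and $k\times k\times 0$. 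Consequently your formula $|I|=\sum_{s}\#\{i:\mathrm{Ann}(x_i)\subseteq M_s\}$ double-counts such a summand, and no argument combining ``$Rx_i$ is a direct summand'' with comaximality of the maximal ideals can succeed, since the counterexample has exactly that form.

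For comparison, the paper gives no proof at all: it cites \cite[Proposition 2.7]{Heidari}, and the statement has evidently lost hypotheses in transcription --- the honest content is the local case, where your argument $|I|=\dim_{R/M}(M/M^2)=|J|$ via Nakayama is precisely the standard proof. In the one place the paper applies the lemma (Theorem \ref{3437}), condition $(*)$ is in force, in particular $\mathrm{Ann}(x_{i,j}^{\alpha})\subseteq M_j$; that hypothesis fails for the decomposition $M=R(0,1,1)$ above (there $\mathrm{Ann}((0,1,1))=k\times 0\times 0\not\subseteq M$), and some such supplementary assumption is exactly what would force each summand to be local and let your localization bookkeeping close. So your local case is complete and matches the cited source, but the semi-local extension has a genuine gap at the step you flagged, and as the statement stands the gap cannot be filled, because the statement itself is false.
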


\begin{thm} \label{3437}
Let $R$ be a semi-local ring such that satisfies condition $(*)$,  Max$(R)=\{M_1, \ldots, M_t\}$, every prime ideal $P$ has the form $\bigoplus_{i\in I}Rx_i$ where $Rx_i$ is a direct sum of cyclically presented $R$-modules and $x_i\notin {\rm Nil}(R)$ for every $i\in I$ where $I$ is an index set. Then for every maximal ideal $M_j$ of $R$ either $M_j$ is cyclic or $M_j = Rx_{1,j}\oplus Rx_{2,j}$ such that $Rx_{1,j}$ and $Rx_{2,j}$ are cyclically presented. Moreover, $R/{\rm Ann}(x_{1,j})$ and $R/{\rm Ann}(x_{2,j})$ are principal ideal rings for every $j\in \{1, \ldots , t\}$.

\end{thm}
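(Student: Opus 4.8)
The plan is to dispose of the cyclic maximal ideals at once and to reduce each non-cyclic one to the local theorem of Moradzadeh and Shojaee (\cite[Theorem 2.16]{Moradzadeh}) by localizing. I would fix a maximal ideal $M_j$. If $M_j$ is cyclic there is nothing to prove, so assume $M_j$ is not cyclic; since $M_j$ is prime, the standing hypothesis writes $M_j=\bigoplus_{i\in I}Rx_{i,j}$ with each $Rx_{i,j}$ cyclically presented and $x_{i,j}\notin\mathrm{Nil}(R)$, and non-cyclicity means $|I|\ge 2$. The whole problem is then to prove that $|I|=2$ and that the two quotients $R/\mathrm{Ann}(x_{i,j})$ are principal ideal rings; the ``cyclically presented'' assertion is already built into the hypothesis.

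For the count I would pass to the localization $R_{M_j}$, a local ring with maximal ideal $\mathfrak m=M_jR_{M_j}$. Localization is exact and commutes with direct sums, and it sends a cyclically presented module $Rx\cong R/\mathrm{Ann}(x)$ (with $\mathrm{Ann}(x)$ principal, by Proposition \ref{2.3}) to $R_{M_j}/\mathrm{Ann}(x)R_{M_j}$, again cyclically presented; hence every prime of $R_{M_j}$, being the localization of a prime of $R$ contained in $M_j$, is a direct sum of cyclically presented $R_{M_j}$-modules. Thus $R_{M_j}$ satisfies condition $(3)$ of \cite[Theorem 2.16]{Moradzadeh}, and that theorem leaves two alternatives: either $R_{M_j}$ is a principal ideal ring, so $\mathfrak m$ is cyclic, or $\mathfrak m=R_{M_j}\bar u\oplus R_{M_j}\bar v$ is a direct sum of exactly two nonzero cyclic modules. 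Now condition $(*)$(2) with $\alpha=1$ gives $\mathrm{Ann}(x_{i,j})\subseteq M_j$, so no generator dies under localization and $\mathfrak m=\bigoplus_{i\in I}R_{M_j}(x_{i,j}/1)$ is a direct sum of exactly $|I|$ nonzero cyclic modules. Comparing this with the decomposition produced above, via Lemma \ref{2626} applied inside the local ring $R_{M_j}$, forces $|I|$ to equal the number of summands on the other side; since $M_j$ is not cyclic the principal (one-summand) case is excluded, and I conclude $|I|=2$, that is, $M_j=Rx_{1,j}\oplus Rx_{2,j}$.

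It remains to produce the principal-ideal-ring quotients, and here I would first record the global dichotomy that rules out any mixed behaviour: if even one maximal ideal were cyclic, then the spectrum computation underlying Theorem \ref{222} forces $R$ to be a domain, in which a decomposition $\bigoplus Rx_{i,j}$ with two nonzero complementary summands is impossible (the product of two distinct generators is $0$), so every maximal ideal would be cyclic and Theorem \ref{222} would finish the proof. Consequently the presence of one non-cyclic maximal ideal means that none is cyclic, placing us exactly under the hypotheses of Theorem \ref{prev}, whose part $(2)$ then yields that $R/\mathrm{Ann}(x_{i,j})$ is a principal ideal ring for each $i$ and $j$. I expect the genuine obstacle to lie precisely at this interface between the two regimes: making rigorous that a cyclic maximal ideal forces $R$ to be a domain (so that Theorem \ref{prev} is legitimately available for every non-cyclic $M_j$), and, inside the localization step, verifying carefully that annihilators of cyclic modules localize and that no cyclic summand collapses, so that the invariant transported by Lemma \ref{2626} really is $|I|$ rather than some smaller number.
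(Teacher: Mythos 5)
Your counting argument is genuinely different from the paper's, and it works. The paper stays inside $R$: writing $M_j = Rx \oplus Ry \oplus L$, it uses both clauses of condition $(*)$ to prove that the complement of $Rx$ is exactly $\mathrm{Ann}(x)$, then invokes Proposition \ref{2.3} to see that this complement is principal, so $M_j$ is a sum of two nonzero cyclics and Lemma \ref{2626} forces $|I|=2$; a side benefit is that $\mathrm{Ann}(x_{1,j})$ is exhibited explicitly as the complementary summand. Your route instead localizes at $M_j$, feeds condition $(3)$ of \cite[Theorem 2.16]{Moradzadeh} to the local ring $R_{M_j}$ (legitimate: localization is exact, commutes with direct sums, and sends $R/aR$ to $R_{M_j}/aR_{M_j}$), and transfers the count back through Lemma \ref{2626} over $R_{M_j}$, with condition $(*)(2)$ guaranteeing that none of the $|I|$ summands collapses; this uses only clause $(2)$ of condition $(*)$ and outsources the hard work to the known local theorem --- more modular, at the cost of localization bookkeeping. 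For the principal-ideal-ring quotients both you and the paper fall back on Theorem \ref{prev}$(2)$, but you correctly notice what the paper silently skips: Theorem \ref{prev} assumes that \emph{no} maximal ideal is cyclic, so mixed behaviour must be excluded before it can be cited. The loose end you flag there does close, using condition $(*)(2)$: if some $M_k = Rx_k$ is cyclic (and one may take $x_k \notin \mathrm{Nil}(R)$ by applying Lemma \ref{2626} to the hypothesized decomposition of the prime $M_k$), then $M_k$ cannot be a minimal prime --- otherwise $x_k/1$ would be nilpotent in $R_{M_k}$, giving $sx_k^n = 0$ for some $s \notin M_k$ and so $s \in \mathrm{Ann}(x_k^n) \setminus M_k$, contradicting $\mathrm{Ann}(x_k^n) \subseteq M_k$ --- hence some prime lies strictly below $M_k$, and the computation of Theorem \ref{222} forces that prime to be $(0)$; thus $R$ is a domain, whereas a non-cyclic $M_j = Rx_{1,j}\oplus Rx_{2,j}$ yields $x_{1,j}x_{2,j} \in Rx_{1,j}\cap Rx_{2,j} = 0$ with both factors nonzero, a contradiction. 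With that observation inserted, your proof is complete, and at this particular interface it is more careful than the paper's own.
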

\begin{proof}
{ Let $R$ be a semi-local ring such that satisfies condition $(*)$, Max$(R)=\{M_1, \ldots, M_t\}$ every prime ideal $P$ has the form $\bigoplus_{i\in I}Rx_i$ where $Rx_i$ is a direct sum of cyclically presented $R$-modules and $x_i\notin {\rm Nil}(R)$ for every $i\in I$ where $I$ is an index set. Suppose that $M_j$ is not cyclic for some $j\in \{1, \ldots , t\}$. Then by Theorem \ref{prev} Part $(2)$, $R/$Ann$(x_{i,j})$ is a principal ideal ring and $Rx_{1,j}$ and $Rx_{2,j}$ are cyclically presented for each $j\in \{1, \ldots , t\}$. Since $M_j$ is not cyclic, one may assume that $x = x_{1,j}$ and $y = x_{2,j}$ are nonzero. Hence, $M_j = Rx \oplus Ry \oplus L$, where $L = \bigoplus_{i\in I\setminus \{1, 2\}}Rx_{i,j}$. We claim that Ann$(x) = Ry \oplus L$. It is clear that  $Ry \oplus L\subseteq$ Ann$(x)$. Suppose that $a\in$ Ann$(x)$. Since $R$ satisfies condition $(*)$,  $a = rx + z \in$ Ann$(x)$, where $r \in R$ and $z \in Ry \oplus L$. If $rx = 0$, then Ann$(x) = Ry \oplus L$. So, suppose that $rx \neq 0$, then $rx = bx^n$ for some unit element $b$ and positive integer $n$, as $R$ satisfies condition $(*)$ and $R/$Ann$(x)$ is a principal ideal ring. This means that $0 = ax = (rx + z)x = rx^2 = bx^{n+1}$.
Hence $x^{n+1} = 0$, and thus $x \in$ Nil$(R)$ which is a contradiction. This follows that Ann$(x) = Ry \oplus L$ and so the claim is proved. Since $Rx$ is cyclically presented, Proposition \ref{2.3} follows that Ann$(x)$ is cyclic, and so Ann$(x) = R\alpha$ for some $\alpha \in R$. Therefore $M_j =  R\alpha \oplus Rx$. Thus Lemma \ref{2626} implies that $|I| = 2$. This completes the proof.
}
\end{proof}

\begin{cor} \label{2.11}
Let $R$ be a semi-local ring such that satisfies condition $(*)$, Max$(R)=\{M_1, \ldots, M_t\}$ and  every prime ideal $P$ has the form $\bigoplus_{i\in I}Rx_i$ where $Rx_i$ is a direct sum of cyclically presented $R$-modules and $x_i\notin {\rm Nil}(R)$ for every $i\in I$ where $I$ is an index set. Then for every maximal ideal $M_j$ of $R$ either $M_j$ is cyclic or $M_j = Rx_{1,j}\oplus Rx_{2,j}$ such that $Rx_{1,j}$ and $Rx_{2,j}$ are cyclically presented. Moreover,

$(1)$ If $M_j = Rx_j$ such that $x_j \notin Nil(R)$ for every $j\in \{1, \ldots, t\}$, then {\rm Spec}$(R) = \{(0), M_1, \ldots, M_t\}$.

$(2)$ If $M_j = Rx_{1,j}\oplus Rx_{2,j}$ such that $x_{1,j}, x_{2,j}\notin {\rm Nil}(R)$ for every $j\in \{1, \ldots, t\}$, then {\rm Spec}$(R) = \{Rx_{1,j}, Rx_{2,j}, M_1, \ldots, M_t\}_{j\in \{1, \ldots, t\}}$.

\end{cor}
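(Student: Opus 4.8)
The plan is to derive the entire statement as an immediate consequence of the three preceding results, since the hypotheses stated here are precisely the specializations under which those results apply; accordingly, no genuinely new argument is required, only a careful matching of hypotheses. First I would note that the opening dichotomy---that each maximal ideal $M_j$ is either cyclic or of the form $Rx_{1,j}\oplus Rx_{2,j}$ with $Rx_{1,j}$ and $Rx_{2,j}$ cyclically presented---is verbatim the conclusion of Theorem \ref{3437}. The standing assumptions (semi-local, condition $(*)$, and every prime ideal a direct sum of the prescribed cyclically presented modules with non-nilpotent generators) coincide exactly with those of Theorem \ref{3437}, so this first assertion is simply inherited.

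For Part $(1)$, I would observe that the hypothesis $M_j = Rx_j$ with $x_j\notin {\rm Nil}(R)$ for every $j\in\{1,\ldots,t\}$ is precisely the setting of Theorem \ref{222}. Applying that theorem yields that $R$ is a principal ideal domain and that ${\rm Spec}(R) = \{(0), M_1, \ldots, M_t\}$; the spectrum statement is exactly Part $(1)$, so nothing further is needed here.

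For Part $(2)$, I would invoke Theorem \ref{prev}: under the assumption that $M_j = Rx_{1,j}\oplus Rx_{2,j}$ with $x_{1,j}, x_{2,j}\notin {\rm Nil}(R)$ for every $j$---so that every maximal ideal is non-cyclic---that theorem gives ${\rm Spec}(R) = \{M_1, \ldots, M_t, P_{k,j}\}_{k\in I,\, j\in\{1,\ldots,t\}}$ with $P_{k,j} = \bigoplus_{i\in I\setminus\{k\}} Rx_{i,j}$. The key step is then to make these $P_{k,j}$ explicit. By Theorem \ref{3437}, applied in the non-cyclic case, the index set satisfies $|I| = 2$, so $I = \{1,2\}$; substituting $k=1$ and $k=2$ collapses $P_{k,j}$ to $Rx_{2,j}$ and $Rx_{1,j}$, respectively. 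Hence $\{P_{k,j}\}_{k\in I} = \{Rx_{1,j}, Rx_{2,j}\}$ for each $j$, and the spectrum reads ${\rm Spec}(R) = \{Rx_{1,j}, Rx_{2,j}, M_1, \ldots, M_t\}_{j\in\{1,\ldots,t\}}$, which is Part $(2)$.

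The proof is thus essentially bookkeeping, and I do not expect a serious obstacle. The only substantive ingredient is the reduction $|I| = 2$ imported from Theorem \ref{3437}, which is what permits the abstract prime ideals $P_{k,j}$ of Theorem \ref{prev} to be rewritten as the principal primes $Rx_{1,j}$ and $Rx_{2,j}$. The one point I would check with care is that the hypotheses of all three cited results are simultaneously satisfied in each of the two cases, and that the condition $x_i\notin{\rm Nil}(R)$ is genuinely in force---as it is used in those proofs to prevent $R$ from degenerating to a local ring and to exclude a generator becoming nilpotent.
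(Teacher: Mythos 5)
Your proposal is correct and follows essentially the same route as the paper: the opening dichotomy is inherited from Theorem \ref{3437}, Part $(1)$ is read off from Theorem \ref{222}, and Part $(2)$ from Theorem \ref{prev}. The only difference is cosmetic --- you invoke $|I|=2$ from Theorem \ref{3437} to identify the primes $P_{k,j}$ with $Rx_{1,j}$ and $Rx_{2,j}$, whereas this is already immediate from the hypothesis of Part $(2)$, which posits $M_j = Rx_{1,j}\oplus Rx_{2,j}$.
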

\begin{proof}
{ $(1)$ follows from Theorem \ref{222}.

$(2)$ follows from Theorem \ref{prev}.

}
\end{proof}

Part $(2)$ of Corollary \ref{2.11} implies that $R$ is a local ring. Because, if $R$ has at
least two maximal ideals of the form $M_1 = P_{1,1}\oplus P_{2,1}$ and $M_2 = P_{1,2}\oplus P_{2,2}$
where $P_{i,j} = Rx_{i,j}$ for $1 \geq i,j \geq 2$, then $P_{1,1}P_{2,1} = 0 \subseteq P_{1,2},P_{2,2}$. So,
without loss of generality, $P_{1,1} \subseteq P_{1,2}$ and $P_{2,1} \subseteq P_{2,2}$, since $P_{1,2}\cap P_{2,2} = 0$.
This follows that $M_1 \subseteq M_2$, a contradiction.

\begin{cor}
Let $R$ be a semi-local ring such that satisfies condition $(*)$, Max$(R)=\{M_1, \ldots, M_t\}$, every prime ideal $P$ has the form $\bigoplus_{i\in I}Rx_i$ where $Rx_i$ is a direct sum of cyclically presented $R$-modules and $x_i\notin {\rm Nil}(R)$ for every $i\in I$ where $I$ is an index set. Then $R$ is Noetherian.

\end{cor}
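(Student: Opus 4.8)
The plan is to reduce the problem to the classical criterion of Cohen \cite[Theorem 2]{Cohen}, which asserts that a commutative ring is Noetherian if and only if each of its prime ideals is finitely generated. It therefore suffices to show that every prime ideal of $R$ is finitely generated; in fact I would show the sharper statement that each prime is generated by at most two elements. Since the structural theorems already established give an essentially complete inventory of $\mathrm{Spec}(R)$, the argument reduces to bookkeeping over that list.

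First I would dispose of the maximal ideals. By Theorem \ref{3437}, for every $j$ the maximal ideal $M_j$ is either cyclic, hence generated by a single element, or of the form $M_j = Rx_{1,j}\oplus Rx_{2,j}$ with $Rx_{1,j}$ and $Rx_{2,j}$ cyclically presented, hence generated by the two elements $x_{1,j}$ and $x_{2,j}$. In either case $M_j$ is finitely generated, and since $\mathrm{Max}(R)=\{M_1,\ldots,M_t\}$ is finite, this settles all $t$ maximal ideals at once.

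Next I would treat the non-maximal primes, where the case split is governed by the shape of the maximal ideals. If every maximal ideal is cyclic, Theorem \ref{222} shows that $R$ is a principal ideal domain with $\mathrm{Spec}(R)=\{(0),M_1,\ldots,M_t\}$, so the only non-maximal prime is $(0)$, which is trivially finitely generated (indeed $R$ is already Noetherian in this case). If some maximal ideal $M_j$ fails to be cyclic, then by Theorem \ref{3437} one has $|I|=2$, and Theorem \ref{prev}$(1)$ identifies the primes lying below $M_j$ as the cyclic summands $Rx_{1,j}$ and $Rx_{2,j}$. Thus in every subcase each non-maximal prime is either $(0)$ or one of the cyclic modules $Rx_{i,j}$, hence finitely generated. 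The decisive point, inherited from the proof of Theorem \ref{prev}, is that condition $(*)$ forces $Px_{k,j}=0$; this is exactly what collapses a prime $P\subsetneq M_j$ onto a single cyclic summand and so guarantees that it is cyclic.

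Having shown that every prime ideal of $R$ is generated by at most two elements, Cohen's criterion \cite[Theorem 2]{Cohen} yields that $R$ is Noetherian. I expect the main obstacle to lie in the careful enumeration in the mixed situation, where some maximal ideals are cyclic and others are not: one must verify that the primes contributed by a non-cyclic $M_j$ and those contributed by a cyclic $M_{j'}$ together exhaust $\mathrm{Spec}(R)$, so that no prime escapes the finite-generation count. Once the completeness of this enumeration is confirmed --- which rests on the spectrum computations of Theorems \ref{222} and \ref{prev}, combined with the fact that each prime lies below some maximal ideal --- the conclusion is immediate.
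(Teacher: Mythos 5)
Your proposal is correct and follows essentially the same route as the paper: the paper's proof simply cites Corollary \ref{2.11} (which packages the spectrum computations of Theorems \ref{222}, \ref{prev} and \ref{3437} that you invoke directly) to conclude that every prime ideal is finitely generated, and then applies Cohen's theorem \cite[Theorem 2]{Cohen}. Your unpacking of the case analysis, including the observation that each prime needs at most two generators, is just a more detailed version of the same argument.
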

\begin{proof}
{By Corollary \ref{2.11}, every prime ideal of $R$ is finitely generated and so Cohen Theorem (see \cite[Theorem 2]{Cohen}) implies that $R$ is Noetherian.
}
\end{proof}

Next a class of semi-local rings for which every ideal is a direct sum of cyclically presented $R$-modules is given.

\begin{thm} \label{vieww}
Let $R$ be a semi-local ring such that satisfies condition $(*)$, Max$(R)=\{M_1, \ldots, M_t\}$, $M_j = Rx_{1,j}\oplus Rx_{2,j}$ where $x_{1,j}, x_{2,j}\notin {\rm Nil}(R)$ and let $R/{\rm Ann}(x_{1,j})$ and $R/{\rm Ann}(x_{2,j})$ are principal ideal rings for every $j\in \{1, \ldots , t\}$. Then every ideal of R is a direct sum of cyclically presented R-modules.

\end{thm}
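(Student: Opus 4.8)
The plan is to reduce the statement to the local case treated in \cite[Theorem 2.16]{Moradzadeh} (the theorem recalled in the Introduction), by first proving that the hypotheses force $R$ to be local. Granting this, $R$ is a local ring whose maximal ideal $M=M_1=Rx_{1,1}\oplus Rx_{2,1}$ satisfies $x_{1,1},x_{2,1}\notin{\rm Nil}(R)$ with $R/{\rm Ann}(x_{1,1})$ and $R/{\rm Ann}(x_{2,1})$ principal ideal rings; this is exactly the second alternative of statement $(5)$ of \cite[Theorem 2.16]{Moradzadeh}. Applying the implication $(5)\Rightarrow(1)$ of that theorem then gives that every ideal of $R$ is a direct sum of cyclically presented modules, which is the assertion. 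Hence the whole task is to prove $t=1$.

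To get there I would first pin down the annihilators of the generators. Fix $j$ and write $x=x_{1,j}$, $y=x_{2,j}$; since $M_j=Rx\oplus Ry$ we have $xy=0$, so $Ry\subseteq{\rm Ann}(x)$. For the reverse inclusion take $a\in{\rm Ann}(x)$. By condition $(*)$, part $(2)$, ${\rm Ann}(x)\subseteq M_j$, so $a=u+v$ with $u\in Rx$, $v\in Ry$; multiplying by $x$ and using $yx=0$ gives $ux=0$. If $u\neq 0$, then condition $(*)$, part $(1)$, forces $u=cx^{n}$ for a unit $c$, whence $x^{n+1}=0$ and $x\in{\rm Nil}(R)$, a contradiction. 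So $u=0$ and ${\rm Ann}(x)=Ry$; symmetrically ${\rm Ann}(y)=Rx$. This is the same annihilator computation appearing inside the proof of Theorem \ref{3437}, carried out here without the prime-decomposition hypothesis.

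Next I would show that each summand $Rx_{i,j}$ is a prime ideal. Again with $x=x_{1,j}$, suppose $ab\in Ry={\rm Ann}(x)$ and $a\notin{\rm Ann}(x)$, i.e.\ $ax\neq 0$. Then $ax=cx^{n}$ for a unit $c$ (condition $(*)$, part $(1)$), and $abx=0$ yields $bx^{n}=0$. If $bx\neq 0$, then $bx=c'x^{m}$ for a unit $c'$, giving $x^{m+n-1}=0$ and again contradicting $x\notin{\rm Nil}(R)$. Hence $bx=0$, so $b\in{\rm Ann}(x)=Ry$, and $Ry$ is prime; symmetrically $Rx$ is prime. I expect this to be the main obstacle: one must extract primality of the cyclic summands from condition $(*)$ alone, and it is precisely the non-nilpotency of the generators that keeps the annihilator computation from degenerating.

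Finally I would run the argument recorded after Corollary \ref{2.11}. Suppose for contradiction that $R$ has two distinct maximal ideals $M_1=Rx_{1,1}\oplus Rx_{2,1}$ and $M_2=Rx_{1,2}\oplus Rx_{2,2}$. From $x_{1,1}x_{2,1}=0$ we get $Rx_{1,1}\cdot Rx_{2,1}=0\subseteq Rx_{1,2}$ and $\subseteq Rx_{2,2}$; since $Rx_{1,2}$ and $Rx_{2,2}$ are prime (the previous step applied to $M_2$), each of them contains one of $Rx_{1,1},Rx_{2,1}$. After relabeling I may assume $Rx_{1,1}\subseteq Rx_{1,2}$; the alternative $Rx_{1,1}\subseteq Rx_{2,2}$ would give $Rx_{1,1}\subseteq Rx_{1,2}\cap Rx_{2,2}=0$, hence $x_{1,1}=0\in{\rm Nil}(R)$, which is excluded, so necessarily $Rx_{2,1}\subseteq Rx_{2,2}$. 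Then $M_1=Rx_{1,1}+Rx_{2,1}\subseteq Rx_{1,2}+Rx_{2,2}=M_2$, contradicting that $M_1,M_2$ are distinct maximal ideals. Therefore $t=1$, $R$ is local, and the reduction of the first paragraph completes the proof.
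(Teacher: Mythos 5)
Your proposal is correct, but it takes a genuinely different route from the paper. The paper proves the theorem directly and self-containedly: it fixes a nonzero ideal $I\subseteq M_j=Rx_{1,j}\oplus Rx_{2,j}$ and, using condition $(*)$ repeatedly, classifies $I$ explicitly --- it is either $Rx_{1,j}^n$ or $Rx_{2,j}^m$ (shown to be cyclically presented via ${\rm Ann}(x_{1,j}^n)=Rx_{2,j}$), or $Rx_{1,j}^e\oplus Rx_{2,j}^t$, or a principal ideal $Ru$ with $u=a_1x_{1,j}^{e-1}+b_1x_{2,j}^{t-1}$ and ${\rm Ann}(u)=0$, so that $Ru\cong R/(0)$ is cyclically presented; this classification is then recorded as the corollary following the theorem. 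You instead show that the hypotheses force $t=1$: you first compute ${\rm Ann}(x_{1,j})=Rx_{2,j}$ (the same computation the paper uses inside Theorem \ref{3437} and Theorem \ref{vieww}), then extract primality of each summand $Rx_{i,j}$ directly from condition $(*)$ and the PIR hypothesis --- a step the paper only has available under the stronger prime-decomposition hypothesis of Corollary \ref{2.11}(2), whose accompanying remark your locality argument adapts --- and finally invoke the local theorem \cite[Theorem 2.16]{Moradzadeh}, implication $(5)\Rightarrow(1)$, as a black box. Your steps all check out: the primality argument is sound (if $ab\in{\rm Ann}(x)$ and $ax\neq 0$, condition $(*)$ turns $ax$ and, if nonzero, $bx$ into unit multiples of powers of $x$, contradicting $x\notin{\rm Nil}(R)$), and the relabeling in the two-maximal-ideal argument is handled carefully via $Rx_{1,2}\cap Rx_{2,2}=0$. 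What each approach buys: yours is shorter and exposes the noteworthy fact that the theorem's hypotheses are never satisfied by a genuinely semi-local ring (so the ``semi-local'' generality is illusory), but it rests on the external local result; the paper's argument is longer yet self-contained and yields, as a byproduct, the complete list of ideals of $R$.
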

\begin{proof}
{ Let $I$ be a nonzero ideal of $R$. Then $I\subseteq M_j = Rx_{1,j}\oplus Rx_{2,j}$ for some $j\in \{1, \ldots , t\}$. If $I\subseteq Rx_{1,j}$, then $I$ is cyclic, as $Rx_{1,j} \cong R/{\rm Ann}(x_{1,j})$ is a principal ideal ring, and hence $I = Rz$ for some element $z$ of $R$. Since $R$ satisfies condition $(*)$,  $z = ax_{1,j}^n$ for some unit element $a$ and positive integer $n$. Thus $I = Rx_{1,j}^n$. It is shown that Ann$(x_{1,j}^n) = Rx_{2,j}$. It is easy to see that $Rx_{2,j} \subseteq$ Ann$(x_{1,j}^n)$. So suppose that $\alpha \in$ Ann$(x_{1,j}^n)$. Then $\alpha = rx_{1,j} + sx_{2,j}$ where $r, s \in R$, since $R$ satisfies condition $(*)$. If $rx_{1,j} \neq 0$, again by condition $(*)$,  $rx_{1,j} = bx_{1,j}^m$ for some unit element $b$ and positive integer $m$. This means that $bx_{1,j}^{m+n} = (rx_{1,j} +sx_{2,j})x_{1,j}^n = 0$. Thus, $x_{1,j}^{n+m} = 0$, a contradiction since $x_{1,j}\notin {\rm Nil}(R)$, and hence $I = Rx_{1,j}^n\cong R/{\rm Ann}(x_{1,j}^n) = R/Rx_{2,j}$ which is cyclically presented. If $I\subseteq Rx_{2,j}$, a similar argument shows that $I$ is cyclically presented. Therefore, assume that $I \nsubseteq Rx_{1,j}$ and $I \nsubseteq Rx_{2,j}$. Now, suppose that $0\neq w = rx_{1,j} + sx_{2,j}\in I$ where $r, s \in R$. By condition $(*)$,  $w = ax_{1,j}^n + bx_{2,j}^m$ for some unit elements $a,b\in R$ and positive integers $n$ and $m$, and so $ax_{1,j}^{n+1} = wx_{1,j}$ and $bx_{2,j}^{m+1} = wx_{2,j}$. Let $e$ and $t$ be the smallest positive integers such that $x_{1,j}^e \in I$ and $x_{2,j}^t\in I$. By a similar argument as above one may see that Ann$(x_{1,j}^e) = Rx_{2,j}$ and Ann$(x_{2,j}^t) = Rx_{1,j}$, which means that $Rx_{1,j}^e$ and $Rx_{2,j}^t$ are cyclically  presented $R$-modules. If  $I =Rx_{1,j}^e \oplus Rx_{2,j}^t$, then $I$ is a direct sum of cyclically presented $R$-modules. Hence, without loss of generality assume that $I \neq Rx_{1,j}^e \oplus Rx_{2,j}^t$. It is easy to see that $ Rx_{1,j}^e \oplus Rx_{2,j}^t \subset I$, and thus there exists an element $u = a_1x_{1,j}^{n_1} + b_1x_{2,j}^{m_1} \in I \setminus Rx_{1,j}^e \oplus Rx_{2,j}^t$ for some unit elements $a_1,b_1\in R$ and positive integers $n_1$ and $m_1$. We show that $n_1 = e-1$ and $m_1 = t - 1$. First, suppose that $n_1 \geq e$ and $m_1 \geq t$. Then $u \in Rx_{1,j}^e \oplus Rx_{2,j}^t$, a contradiction. If $n_1 \geq e$ and $m_1 < t$, then $a_1x_{1,j}^{n_1} \in Rx_{1,j}^e \subset I$, and so $b_1x_{2,j}^{m_1}\in I$. This means that $x_{2,j}^{m_1}\in I$, a contradiction. If $m_1 \geq t$ and $n_1 < e$, then $b_1x_{2,j}^{m_1} \in Rx_{2,j}^{t} \in I$, and so $a_1x_{1,j}^{n_1} \in I$. Therefore $x_{1,j}^{n_1}\in I$, a contradiction. Thus,   $n_1 < e$ and $m_1 < t$. If $n_1 < e-1$, then one may see that $ux_{1,j} = a_1x_{1,j}^{n_1+1} \in I$. Therefore, $x_{1,j}^{n_1+1} \in I$ which is a contradiction. Hence $n_1 = e-1$. A similar argument shows that $m_1 = t - 1$, and thus $u = a_1x_{1,j}^{e-1} + b_1x_{2,j}^{t-1}$. In the following, we show that $I = Ru$. One can easily see that $Rx_{1,j}^e \oplus Rx_{2,j}^t\subseteq Ru\subseteq I$. Conversely, suppose that $v \in I \setminus (Rx_{1,j}^e \oplus Rx_{2,j}^t)$. As above, we have $v = a_2x_{1,j}^{e-1} + b_2x_{2,j}^{t-1}$ for some unit elements $a_2,b_2\in R$. Thus, $v = a_2a_1^{-1}u + (b_2 - a_2a_1^{-1})x_{2,j}^{t-1} \in Ru\oplus Rx_{2,j}^{t-1}$. Hence $I\oplus Rx_{2,j}^{t-1}\subseteq Ru\oplus Rx_{2,j}^{t-1}$, and thus $I\oplus Rx_{2,j}^{t-1} = Ru\oplus Rx_{2,j}^{t-1}$. But $R$ is semi-local; hence finitely generated projective modules cancel from direct sums. Therefore $I = Ru$. It is enough to prove that $Ru$ is cyclically presented. Clearly, $Rx_{1,j}\cap{\rm Ann}(Rx_{1,j}) = 0$ and $Rx_{2,j}\cap {\rm Ann}(Rx_{2,j}) = 0$. We claim that Ann$(u) = 0$. Suppose to the contrary, $0\neq w \in{\rm Ann}(u)$. Then $w = a_3x_{1,j}^{n_3} + b_3x_{2,j}^{m_3}$ for some unit elements $a_3,b_3\in R$ and positive integers $n_3$ and $m_3$ since Ann$(u)\subseteq{\rm Ann}(Ru) = {\rm Ann}(Rx_{1,j})\cap{\rm Ann}(Rx_{2,j})\subseteq M_j$. This means that $wa_1x_{1,j}^{n_1} = a_1a_3x_{1,j}^{n_1+n_3}\in Rx_{1,j}\cap{\rm Ann}(u) \subseteq Rx_{1,j}\cap{\rm Ann}(Rx_{1,j})\cap{\rm Ann}(Rx_{2,j}) = 0$. By a similar argument as above one may see that $wb_1x_{2,j}^{m_1} = b_1b_3x_{2,j}^{m_1+m_3}\in Rx_{2,j}\cap{\rm Ann}(u) \subseteq Rx_{2,j}\cap{\rm Ann}(Rx_{1,j})\cap{\rm Ann}(Rx_{2,j}) = 0$. Thus $x_{1,j}, x_{2,j}\in$ Nil$(R)$, a contradiction. Therefore, Ann$(u) = 0$, and hence $I = Ru \cong R/(0)$ is cyclically presented.}
\end{proof}

Consequently, in view of the proof of Theorem \ref{vieww}, we state the following corollary.

\begin{cor}
Let $R$ be a semi-local ring such that satisfies condition $(*)$, Max$(R)=\{M_1, \ldots, M_t\}$, $M_j = Rx_{1,j}\oplus Rx_{2,j}$ where $x_{1,j}, x_{2,j}\notin {\rm Nil}(R)$ and let $R/{\rm Ann}(x_{1,j})$ and $R/{\rm Ann}(x_{2,j})$ are principal ideal rings for every $j\in \{1, \ldots , t\}$. Then every ideal of $R$ is one of the following forms $R$, $Rx_{1,j}^n$ for some positive integer $n$ and $j\in \{1, \ldots , t\}$, $Rx_{2,j}^m$ for some positive integer $m$ and $j\in \{1, \ldots , t\}$, and $Rx_{1,j}\oplus Rx_{2,j}$ for some $j\in \{1, \ldots , t\}$.

\end{cor}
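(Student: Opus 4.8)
The plan is to refine the case analysis already carried out in the proof of Theorem~\ref{vieww}, this time retaining the \emph{explicit} description of each ideal rather than only its isomorphism type. I fix a nonzero proper ideal $I$; since $\mathrm{Max}(R)=\{M_1,\dots,M_t\}$ and $I$ lies in some maximal ideal, I may write $I\subseteq M_j=Rx_{1,j}\oplus Rx_{2,j}$ for a suitable $j$, and I distinguish cases according to how $I$ sits inside the two summands. The ideal $R$ itself accounts for the first listed form, so I may assume $I$ is a nonzero proper ideal.

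First I dispose of the two ``pure'' cases. If $I\subseteq Rx_{1,j}$, then $Rx_{1,j}\cong R/\mathrm{Ann}(x_{1,j})$ is a principal ideal ring, so $I=Rz$ is cyclic; condition~$(*)$ then forces $z=ax_{1,j}^{\,n}$ with $a$ a unit and $n\ge 1$, whence $I=Rx_{1,j}^{\,n}$. The case $I\subseteq Rx_{2,j}$ is entirely symmetric and gives $I=Rx_{2,j}^{\,m}$. These two subcases yield the second and third forms on the list.

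The real content is the mixed case $I\nsubseteq Rx_{1,j}$ and $I\nsubseteq Rx_{2,j}$. Here I would first record the structural facts isolated inside the proof of Theorem~\ref{vieww}: from $x_{1,j}x_{2,j}\in Rx_{1,j}\cap Rx_{2,j}=0$, condition~$(*)$, and $x_{i,j}\notin\mathrm{Nil}(R)$ one obtains $\mathrm{Ann}(x_{1,j}^{\,n})=Rx_{2,j}$ and $\mathrm{Ann}(x_{2,j}^{\,m})=Rx_{1,j}$ for all $n,m\ge 1$. Letting $e$ and $t$ be minimal with $x_{1,j}^{\,e},x_{2,j}^{\,t}\in I$, the proof of Theorem~\ref{vieww} exhibits a single generator and yields $I=Ru$; the goal is then to upgrade this to the fourth listed form $I=Rx_{1,j}\oplus Rx_{2,j}=M_j$.

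I expect this upgrade to be the main obstacle, and the place where the argument must be scrutinized hardest. The four admissible forms leave no room for an ideal that is ``strictly between'' the pure ones and $M_j$, so everything hinges on excluding such intermediate ideals: one must rule out both a proper diagonal principal ideal such as $R(x_{1,j}+x_{2,j})$ and a proper block $Rx_{1,j}^{\,e}\oplus Rx_{2,j}^{\,t}$ with $(e,t)\neq(1,1)$. The natural tool is the minimality of $e$ and $t$ together with cancellation of finitely generated projectives over the semi-local ring $R$ (as in the proof of Theorem~\ref{vieww}) to pin $I$ down on the nose rather than merely up to isomorphism; concretely I would try to show that in the mixed case $I$ necessarily contains both $x_{1,j}$ and $x_{2,j}$, forcing $e=t=1$ and hence $I=M_j$. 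Establishing that containment — and verifying that no smaller mixed ideal survives condition~$(*)$ — is exactly where I anticipate the delicate work to lie, and where I would check the reduction most carefully before trusting it.
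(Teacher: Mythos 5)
Your handling of the two pure cases matches the paper exactly, but that part was never in doubt: the paper gives no independent proof of this corollary at all, it simply reads the list of forms off the case analysis in the proof of Theorem~\ref{vieww}. The genuine issue is the mixed case, which you explicitly leave open, and here your proposed strategy --- showing that an ideal contained in neither $Rx_{1,j}$ nor $Rx_{2,j}$ must contain both $x_{1,j}$ and $x_{2,j}$, forcing $e=t=1$ and $I=M_j$ --- cannot be carried out, because the claim is false. Take $R=F[[X,Y]]/\langle XY\rangle$, the local ring underlying the paper's closing Example: it satisfies every hypothesis of the corollary (clause $(1)$ of condition $(*)$ is verified in that Example via \cite[Proposition 3.4]{Behboodi2}, clause $(2)$ is immediate since ${\rm Ann}(x^{\alpha})=Ry$, ${\rm Ann}(y^{\alpha})=Rx$ and the Nakayama clause is automatic in a local ring; moreover $M=Rx\oplus Ry$ with $x,y\notin{\rm Nil}(R)$ and $R/{\rm Ann}(x)\cong F[[X]]$, $R/{\rm Ann}(y)\cong F[[Y]]$ are principal ideal rings). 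In this ring the ideal $I=R(x+y)$ lies in neither $Rx$ nor $Ry$, yet contains neither $x$ nor $y$: indeed $x^2=x(x+y)\in I$ but $x\notin I$, so the minimal exponents are $e=t=2$, and $I$ is a proper ideal strictly between $Rx^2\oplus Ry^2$ and $M$. Likewise $Rx^2\oplus Ry^2$ itself is an ideal equal to none of $R$, $Rx^n$, $Ry^m$, $Rx\oplus Ry$. So the ``intermediate'' ideals you hoped to exclude genuinely exist, and no refinement of the minimality or cancellation arguments will remove them.

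What this reveals is that the fault lies in the statement, not in your inability to close the gap: read as literal equality of ideals (which is how you read it, and how it is written), the corollary is false. What the proof of Theorem~\ref{vieww} actually delivers --- and all that the paper's one-line justification can support --- is that every nonzero proper ideal is $Rx_{1,j}^n$, or $Rx_{2,j}^m$, or $Rx_{1,j}^e\oplus Rx_{2,j}^t$ with arbitrary exponents $e,t\geq 1$, or a principal ideal $Ru$ with $u=ax_{1,j}^{e-1}+bx_{2,j}^{t-1}$ for units $a,b$ and ${\rm Ann}(u)=0$. The published list of four forms is correct only up to $R$-module isomorphism, since $Ru\cong R/(0)\cong R$, $Rx_{1,j}^n\cong R/Rx_{2,j}\cong Rx_{1,j}$, and $Rx_{1,j}^e\oplus Rx_{2,j}^t\cong Rx_{1,j}\oplus Rx_{2,j}$. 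Your instinct that the statement ``leaves no room'' for intermediate ideals was precisely the right thing to scrutinize; the correct resolution is to weaken the conclusion (list the forms above, or interpret ``form'' as isomorphism type), rather than to try to prove $e=t=1$.
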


We are now ready for the main result of this paper. In fact, we give two gauges to check whether every ideal of a semi-local ring $R$ is a direct sum of cyclically presented modules, it suffices to test only the prime ideals or structures of the maximal ideals of $R$.

\begin{thm} \label{2.112}
Let $R$ be a semi-local ring such that satisfies condition $(*)$, Max$(R)=\{M_1, \ldots, M_t\}$ and if $M_j = \bigoplus_{i\in I}Rx_{i,j}$, then $x_{i,j} \notin {\rm Nil}(R)$ for every $i\in I$. Then the following statements are equivalent:

$(1)$ Every ideal of R is a direct sum of cyclically presented $R$-modules.

$(2)$ Every ideal of $R$ is a direct summand of a direct sum of cyclically presented $R$-modules.

$(3)$ Every prime ideal of $R$ is a direct sum of cyclically presented R-modules.

$(4)$ Every prime ideal of $R$ is a direct summand of a direct sum of cyclically presented R-modules.

$(5)$ Either $R$ is a principal ideal ring or $M_j = Rx_{1,j}\oplus Rx_{2,j}$, where $R/{\rm Ann}(x_{1,j})$ and $R/{\rm Ann}(x_{2,j})$ are principal ideal rings for every $j\in \{1, \ldots , t\}$.

\end{thm}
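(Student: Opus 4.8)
The plan is to prove the cycle of implications $(1)\Rightarrow(2)\Rightarrow(4)\Rightarrow(3)\Rightarrow(5)\Rightarrow(1)$, with the bulk of the work concentrated in the bridge $(4)\Rightarrow(3)$ and in the two constructive steps $(3)\Rightarrow(5)$ and $(5)\Rightarrow(1)$. Two of the arrows are immediate: $(1)\Rightarrow(2)$ holds because a direct sum is a direct summand of itself (take the zero complement), and $(2)\Rightarrow(4)$ holds because every prime ideal is an ideal. This reduces the theorem to the three substantive implications treated below.

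For $(5)\Rightarrow(1)$ I would split along the dichotomy in $(5)$. If $R$ is a principal ideal ring, then any ideal is cyclic, say $I=Ry$, and $Ry\cong R/{\rm Ann}(y)$ with ${\rm Ann}(y)$ principal; hence $I$ is cyclically presented, and in particular a direct sum of cyclically presented modules. If instead each $M_j=Rx_{1,j}\oplus Rx_{2,j}$ with $R/{\rm Ann}(x_{1,j})$ and $R/{\rm Ann}(x_{2,j})$ principal ideal rings, then these are exactly the hypotheses of Theorem \ref{vieww}, which delivers $(1)$ verbatim.

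For $(3)\Rightarrow(5)$ I observe that $(3)$, together with the standing hypothesis that the generators appearing in any decomposition $M_j=\bigoplus_{i}Rx_{i,j}$ avoid ${\rm Nil}(R)$, is precisely the hypothesis of Theorem \ref{3437} (and of Theorem \ref{222}). Theorem \ref{3437} then gives, for each $j$, that $M_j$ is either cyclic or of the form $Rx_{1,j}\oplus Rx_{2,j}$ with the required principal ideal ring quotients. A case split closes the argument: if some $M_j$ is cyclic, Theorem \ref{222} forces $R$ to be a principal ideal domain, hence a principal ideal ring, which is the first alternative of $(5)$ (and makes every maximal ideal principal); otherwise every $M_j$ has the rank-two form, which is the second alternative of $(5)$.

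The main obstacle is the bridge $(4)\Rightarrow(3)$: upgrading, for each prime $P$, the statement ``$P$ is a direct summand of a direct sum of cyclically presented modules'' to ``$P$ is a direct sum of cyclically presented modules.'' My plan is first to reduce to finite sums. Writing $P\oplus Q=\bigoplus_{i\in I}C_i$ with each $C_i$ cyclically presented, I would argue that $P$ is finitely generated, so that $P$ is contained in, and therefore a direct summand of, a finite subsum $C_1\oplus\cdots\oplus C_n$ (the retraction onto $P$ restricts to the finite subsum). Extracting this finiteness from the bare summand hypothesis is the delicate point, and I expect to exploit the Nakayama-type clause of condition $(*)$ (Definition \ref{*}) to control $P/MP$ and bound the number of generators. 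Once $P\oplus Q\cong C_1\oplus\cdots\oplus C_n$, I would peel off cyclically presented summands one at a time, at each stage comparing presentations by a Schanuel-type argument and invoking the cancellation of finitely generated projective modules over the semilocal ring $R$, exactly as in the proof of Proposition \ref{2.3}; since each peeled generator has principal annihilator, Proposition \ref{2.3} certifies its cyclic presentation. This exhibits $P$ as a direct sum of cyclically presented modules, yielding $(3)$ and closing the cycle.
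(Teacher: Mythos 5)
Your outer cycle and the two constructive arrows match the paper: $(5)\Rightarrow(1)$ is exactly the paper's appeal to Theorem \ref{vieww}, and $(3)\Rightarrow(5)$ is exactly its appeal to Theorem \ref{3437}, Corollary \ref{2.11} and \cite[p. 8, Exercise 10]{Kaplansky}. (Your case split there has a small defect: Theorem \ref{222} assumes \emph{every} $M_i$ is cyclic, so the mixed case --- some $M_j$ cyclic, others of rank two --- is not covered by it alone and needs the remark following Corollary \ref{2.11}; the paper's own proof is equally terse on this point, so I do not count it against you.)

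The genuine gap is your bridge $(4)\Rightarrow(3)$. The paper does not prove this upgrade by hand at all: it quotes \cite[Proposition 3]{Warfield} (a Krull--Schmidt theorem for infinite sums) to obtain both $(1)\Leftrightarrow(2)$ and $(3)\Leftrightarrow(4)$, and the cycle is then closed by the trivial $(1)\Rightarrow(3)$. Your substitute argument fails at both of its stages. First, the finiteness reduction is circular: the Nakayama-type clause of condition $(*)$ in Definition \ref{*} applies only to \emph{finitely generated} modules $N$, so it cannot be invoked to prove that $P$ is finitely generated; and nothing in the bare hypothesis $P\oplus Q=\bigoplus_{i\in I}C_i$ forces $P$ to lie in a finite subsum unless finite generation is already known. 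Second, even granting $P\oplus Q\cong C_1\oplus\cdots\oplus C_n$, the ``peeling'' step is not an argument but a restatement of the goal: that a direct summand of a (finite) direct sum of cyclically presented modules is again such a direct sum is precisely the Krull--Schmidt-type statement at issue, not a formal consequence of anything you have set up. Schanuel's lemma, as used in Proposition \ref{2.3}, compares two presentations of a module already known to be cyclic; here $P$ is not given as cyclic, so there is no presentation to compare, and cancellation of finitely generated projectives over the semilocal ring $R$ removes a common summand from both sides of an isomorphism but does not produce an internal decomposition of $P$. To repair the proof, either cite \cite[Proposition 3]{Warfield} as the paper does, or supply an actual exchange-property/Krull--Schmidt argument for summands of sums of cyclically presented modules; as written, the central implication of your cycle is unproved.
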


\begin{proof}
{$(5) \Rightarrow (1)$ It follows from Theorem \ref{vieww}.

$(3) \Rightarrow (5)$ It follows from Theorem \ref{3437}, Corollary \ref{2.11} and \cite[p. 8, Exercise 10]{Kaplansky}.

$(1) \Leftrightarrow (2)$ and $(3) \Leftrightarrow (4)$ follow from \cite[Proposition 3]{Warfield}.
}
\end{proof}

We close this paper with the following example.

\begin{example} {\rm Let $F$ be a field and $R := R_1 \oplus R_2$ such that $R_1 = F[[X, Y ]]/{\rm<}XY{\rm>}$ and $R_2$ is a field where $F[[X, K]]$ is a formal power series ring. It is easy to see that $R_1$ and $R_2$ are local rings with maximal ideals $M_1 = R_1x \oplus R_1y$ and $M_2 = \{0\}$ respectively, where $x = X + {\rm<}XY{\rm>}$, $y = Y + {\rm<}XY{\rm>}$ and $x, y\notin$ Nil$(R_1)$. Also, Max$(R) = \{M_1\oplus R_2, R_1\oplus M_2\}$. First we show that $R$ satisfies condition $(*)$. Suppose that $a\in R_1x$ such that $a\neq 0$. Since $R_1$ is a local ring and $R_1/{\rm Ann}(x)$ is a principal ideal ring by \cite[Example 2.17]{Moradzadeh}, it follows from \cite[Proposition 3.4]{Behboodi2} that $a = bx^n$ for some unit $b$ and positive integer $n$. Hence the axiom $(1)$ in condition $(*)$ is satisfied, and so if $M_i = Rx_i \oplus L$ for some ideal $L$ of $R$ and some nonunit $x_i \in R$ such that $R/{\rm Ann}(x_i)$ is a principal ideal ring, then every nonzero element of $Rx_i$ is of the form $ax^n$ for some unit $a$ and positive integer $n$. Now, since $x,y$ are nonunit in $R_1$ and $x, y\notin$ Nil$(R_1)$, we conclude that Ann$(x_i^{\alpha_i}) \subset M_1$ for every $x_i\in \{x, y\}$ and positive integer $\alpha_i$. This means that Ann$(x_i^{\alpha_i}) \subseteq M_1\oplus R_2$ for every $x_i\in \{x, y\}$ and positive integer $\alpha_i$. It remains only for us show that for every finitely generated module $N$ over the commutative ring $R$, if $I$ is an ideal of $R$ such that $I \subseteq M_j$ for some  $M_j \in$ Max$(R) = \{M_1\oplus R_2, R_1\oplus M_2\}$ and $N = IN$, then $N = 0$. Suppose contrary that $N\neq 0$ and look for a contradiction. Assume that $\{a_1, \ldots , a_n\}$ is a minimal generating set for the module $N$ over the commutative ring $R$ and $I \subseteq M_1\oplus R_2$ such that $N = IN$. Since $a_1 \in IN$, we have $a_1 = \sum_{i=1}^na_ib_i$ for some $b_1, \ldots , b_n\in I$, and so $(1 - b_1)a_1 = a_2b_2 + \cdots + a_nb_n$. Suppose that $b_1 = m + r_2 \in M_1\oplus R_2$ for some $m\in M_1$ and $r_2\in R_2$. If $r_2\in M_2$, then $b_1\in M_1\oplus M_2\subseteq R_1 \oplus \{0\}$, and so $1 - b_1$ is a unit of $R$ by \cite[Lemma 3.17]{sha}. Thus $N$ is generated by $\{a_2, \ldots, a_n\}$, a contradiction. If $r_2\notin M_2$, then $r_2$ is a unit of $R_2$. This means that $(1-m_1)a_1 -r_2a_1 = \sum_{i=2}^na_ib_i$. Hence $(r_2 -r_2^2)a_1 = \sum_{i=2}^na_ib_ir_2$. Since $R_2$ is a field we conclude that $(r_2 -r_2^2)$ is unit, and hence $N$ is generated by $\{a_2, \ldots, a_n\}$, a contradiction. Thus $N = 0$. In the other hand, it follows from Theorem \ref{2.112} that every ideal of $R$ is a direct sum of cyclically presented modules.
}
\end{example}

%%%%%%%%%%%%%%%%%%%%%%%%%%%%%%%%%%%%%%%%%%%%%%%%%%%%%%%%%%%%%%%%%%%%%%%%%%%%%%%%%%%%%%%%%%%%%%%%%%%%%%%%%%%%%%%%%%%%%%%%%%%%%%%%%%%%%%%%%%%%%%%%%%%%%%%%%%%%%%%%%%%%%%%%%%%%%%%%%%%%%%%%%%

%\noindent{\bf Acknowledgements.} The authors express their deep gratitude to the referee for his/her meticulous reading and valuable
%suggestions which have definitely improved the paper.
%%%%%%%%%%%%%%%%%%%%%%%%%%%%%%%%%%%%%%%%%%%%%%%%%%%%%%%%%%%%%%%%%%%%%%%%%%%%%%%%%%%%%%%%%%%%%%%%%%%%%%%%%%%%%%%%%%%%%%%%%%%%%%%%%%%%%%%%%%%%%

%%%%%%%%%%%%%%%%%%%%%%%%%%%%%%%%%%%%%%%%%%%%%%%%%%%%%%%%%%%%%%%%%%%%%%%%%%%%%%%%%%%%%%%%%%%%%%%%%%%%%%%%%%%%%%%%%%%%%%%%%%%%%%%%%%%%%%%%%%%%%%%%%%%%%


\begin{thebibliography}{}{\small

\bibitem {Ghorbani} M. Behboodi, A. Ghorbani and A. Moradzadeh-Dehkordi, Commutative Noetherian local rings whose ideals are direct sums of cyclic modules, J. Algebra, 345 (2011), 257--265.

\bibitem {Heidari} M. Behboodi and S. Heidar, Commutative rings whose maximal ideals are direct sum of completely cyclic modules. Mathematical reports, 18(4) (2016), 609--617.

\bibitem {Behboodi2} M. Behboodi and S.H. Shojaee, Commutative local rings whose ideals are direct sums of cyclic modules, Algebr. Represent. Theory, 17 (3) (2014), 971--982.

\bibitem {Cohen} I.S. Cohen, Commutative rings with restricted minimum condition. Duke Math. J. 17 (1950), 27--42.

\bibitem {CohenKaplansky} I.S. Cohen and I. Kaplansky, Rings for which every module is a direct sum of cyclic
modules, Math. Z., 54 (1951), 97--101.

\bibitem {Kaplansky} I. Kaplansky, Commutative Rings (Allyn and Bacon, Boston, 1970).

\bibitem {Kaplansky2} I. Kaplansky, Elementary divisors and modules, Trans. Amer. Math. Soc., 66 (1949), 464--491.

\bibitem {Kothe} G. K\" othe, Verallgemeinerte abelsche gruppen mit hyperkomplexen operatorenring. Math.
Z., 39 (1935), 31–44.

\bibitem {Lam} T.Y. Lam, Lectures on modules and rings, Springer-Verlag, New York, 1999.

\bibitem {Moradzadeh} A. Moradzadeh-Dehkordi and S. H. Shojaee, Two criteria to check whether ideals are direct sums of cyclically presented modules, J. Pure Appl Algebra 223, no 2 (2019), 713--720.

%\bibitem {mc} S. McAdam, Finite coverings by ideals, Ring theory, Proceedings of the Oklahoma conference, Lecture Notes in Pure and Applied Mathematics volume 7, Dekker 1974, 163--171.

\bibitem {sha} R.Y. Sharp, Steps in commutative algebra, 2nd ed. London Mathematical Society Student Texts, 51. Cambridge University Press, Cambridge, 2000.

\bibitem {Warfield} R.B. Warfield Jr. and A Krull--Schmidt theorem for infinite sums of modules, Proc. Am. Math. Soc., 22 (1969), 460--465.


}
\end{thebibliography}
\end{document}